\newcommand{\textobullet}{\textopenbullet}
\newcommand{\patheq}{\rightarrow}
\newcommand{\leqc}{\mathrel{ \leq_C} }
\newcommand{\lc}{\mathrel{<_C}}
\DeclareMathOperator{\Exists}{ \exists}
\DeclareMathOperator{\Forall}{\forall}
\DeclareMathOperator{\Nexists}{\nexists}
\newcommand{\N}{\mathbb{N}}
\newcommand{\R}{\mathbb{R}}
\newcommand{\automaton}{(Q,\Sigma,\delta,q_0,\mathcal{F})}
\newcommand{\POEAP}{POEAP}
\newcommand{\POEAPN}{$\text{POEAP}_0$}
\newcommand{\POEAPND}{$\text{POEAP}_0^{DEC}$}
\newcommand{\stpaths}{P_{s,t}}
\newcommand{\pprice}{P^*_{s,t}}
\newcommand{\pstate}{P^f_{s,t}}
\DeclareMathOperator{\reach}{R}
\newcommand{\fns}{\footnotesize}
\newcommand{\monoidel}{h}
\newcommand{\monoidset}{H}
\newcommand{\neutr}{\boldsymbol{0}}
\newcommand{\pmonoid}{(\monoidset,+,\leq)}
\newcommand{\zonemonoid}{H_{zone}}
\newcommand{\citymonoid}{H_{c}}
\newcommand{\transfermonoid}{H_{tran}}
\newcommand{\stopmonoid}{H_{stop}}
\newcommand{\distmonoid}{H_{dist}}
\newcommand{\city}{c}
\newcommand{\cities}{C}
\newcommand{\fareevents}{S}
\newcommand{\fareevent}{s}
\newcommand{\nullevent}{s_0}
\newcommand{\ticketgraph}{\mathcal{T}}
\newcommand{\tickets}{T}
\newcommand{\ticket}{\tau}
\newcommand{\tarcs}{E}
\newcommand{\tarc}{e}
\newcommand{\rarc}{a}
\newcommand{\rarcs}{A}
\newcommand{\attributespace}{W}
\newcommand{\statespace}{F}
\newcommand{\state}{f}
\newcommand{\altstate}{g}
\newcommand{\startstate}{\mu}
\newcommand{\transfunc}{\Gamma}
\newcommand{\update}{\mathrm{Up}}
\newcommand{\price}{\pi}
\newcommand{\farenetwork}{\mathcal{N}}
\newcommand{\tf}{\tau}
\newcommand{\ef}{e}
\newcommand{\wfs}{w}  
\newcommand{\wfa}{w}  
\newcommand{\sixtuple}{( \ticketgraph,\transfunc,\wfa,\ef,\startstate,\price) }
\newcommand{\mdvz}[1]{Z_{#1}}
\newcommand{\mdvmax}{M}
\newcommand{\mdvdis}{D}
\newcommand{\mdvkl}{D_L}
\newcommand{\mdvkh}{D_H}
\newcommand{\mdvl}{L}
\newcommand{\mdvh}{H}
\newcommand{\mdvt}[1]{C_{#1}}
\newcommand{\eventhalle}{\mathit{hal}}
\newcommand{\eventleipzig}{\mathit{lei}}
\newcommand{\eventtrans}{\mathit{tra}}
\newcommand{\eventcity}{\mathit{city}}
\newcommand{\RAP}{RAPTOR}
\newcommand{\MCRAP}{McRAP}
\newcommand{\BMRAP}{BMRAP}
\newcommand{\TightBMRAP}{Tight-BMRAP}
\newcommand{\TargetBMRAP}{Target-BMRAP}
\newcommand{\TTPI}{\Pi}
\newcommand{\TTP}{\mathcal{P}}
\newcommand{\TTR}{\mathcal{R}}
\newcommand{\TTT}{\mathcal{D}}
\newcommand{\TTF}{\mathcal{F}}
\newcommand{\TT}{\mathbbm{T}}
\newcommand{\TimeTable}{\TT = (\TTPI, \TTP, \TTR,\TTT,\TTF)}
\DeclareMathOperator{\tr}{tr}
\newcommand{\raptime}{\eta}
\newcommand{\arr}{\raptime_{arr}}
\newcommand{\dep}{\raptime_{dep}}
\newcommand{\asig}{\sigma_{arr}}
\newcommand{\tsig}{\sigma_{tr}}
\newcommand{\tripweight}{\wfa_1}
\newcommand{\tripevent}{\ef_1}
\newcommand{\transweight}{\wfa_2}
\newcommand{\transevent}{\ef_2}
\newcommand{\janch}{\mathcal{J}_{\mathcal{A}}}
\newcommand{\jr}{\mathcal{J}_{\mathcal{R}}}
\newcommand{\jstate}{\mathcal{J}^f}
\newcommand{\jprice}{\mathcal{J}^*}
\newcommand{\jrstate}{\mathcal{J}^f_{\mathcal{R}}}
\newcommand{\jrprice}{\mathcal{J}^*_{\mathcal{R}}}
\newcommand{\sjanch}{{J}_{\mathcal{A}}}
\newcommand{\jdelling}{\bar{ \mathcal{J}_{\mathcal{R}}}}
\newcommand{\backdep}{\raptime_{dep}}
\newcommand{\transtime}{\raptime_{ch}}
\newcommand{\raptrip}{d}
\newcommand{\fssset}{\mathcal{J}^{fss}}
\newcommand{\rfssset}{\mathcal{J}^{fss}_{\mathcal{R}}}
\newcommand{\station}[1]{\mathcal{#1}}
\newcommand{\fullcomp}{C_F}
\newcommand{\partcomp}{C_P}
\newcommand{\nocomp}{C_N}
\pgfplotsset{compat=1.16}
\providecommand{\keywords}[1]{\noindent\textbf{\textit{Index terms---}} #1}
\providecommand{\competinginterest}[1]{\noindent\textbf{\textit{Declaration of interest---}} #1}
\newtheorem{definition}{Definition}[section]
\newtheorem{proposition}{Proposition}[section]
\newtheorem{lemma}{Lemma}[section]
\newtheorem{theorem}{Theorem}[section]
\theoremstyle{definition}
\newtheorem{example}{Example}[section]
\tikzset{ farenodeStyle/.style={ draw=black,thick ,circle,minimum size=1cm,font=\large } }
\tikzset{ farearcStyle/.style={thick,-{Latex[length=2mm, width=2mm]}} }
\newlength{\mytextl}
\newcommand{\farenode}[4]
{ \node[farenodeStyle, #4] (#1) at #2 {#3}; }
\newcommand{\farearc}[5]
{
	\draw[farearcStyle] (#1) edge (#2); 
    \settowidth{\mytextl}{ \pgfinterruptpicture  #3 \endpgfinterruptpicture}
     \message{The text height is \the\mytextl} 
	\ifthenelse{#5 = 1}
	{
	  \ifthenelse{\lengthtest{ #4 pt > 0.99pt}}
	  {
	    \path [postaction={decorate,decoration={raise=2mm,text along path,text={#3{}},text align={left, left indent=0cm}}}] (#2) to [] (#1); 
	  }
	  {
	    \ifthenelse{\lengthtest{ #4 pt < 0.01pt}}
	    {
	      \path [postaction={decorate,decoration={raise=2mm,text along path,text={#3{}},text align={right, right indent=0cm}}}]      (#2) to [] (#1); 
	    }
	    {
	      \path [postaction={decorate,decoration={raise=2mm,text along path,text={#3{}}, text align={left indent={#4\dimexpr\pgfdecoratedpathlength\relax}} }}]      (#2) to [] (#1); 
	    }
	  }
	}
	{
	  \ifthenelse{\lengthtest{ #4 pt < 0.01pt}}
	  {
	    \path [postaction={decorate,decoration={raise=2mm,text along path,text={#3{}},text align={left, left indent=0cm}}}]      (#1) to [] (#2); 
	  }
	  {
	    \ifthenelse{\lengthtest{ #4 pt > 0.99pt}}
	    {
	      \path [postaction={decorate,decoration={raise=2mm,text along path,text align=center,text={#3{}},text align={right, right indent=0cm}}}]  (#1) to [] (#2); 
	    }
	    {
	      \path [postaction={decorate,decoration={raise=2mm,text along path,text align={left indent={#4\dimexpr\pgfdecoratedpathlength\relax}},text={#3{}} }}]      (#1) to [] (#2); 
	    }
	  }
	}	
}  
\title{Price Optimal Routing in Public Transportation}
\date{}
\author{ \href{https://orcid.org/0000-0001-5112-4191}{\includegraphics[scale=0.06]{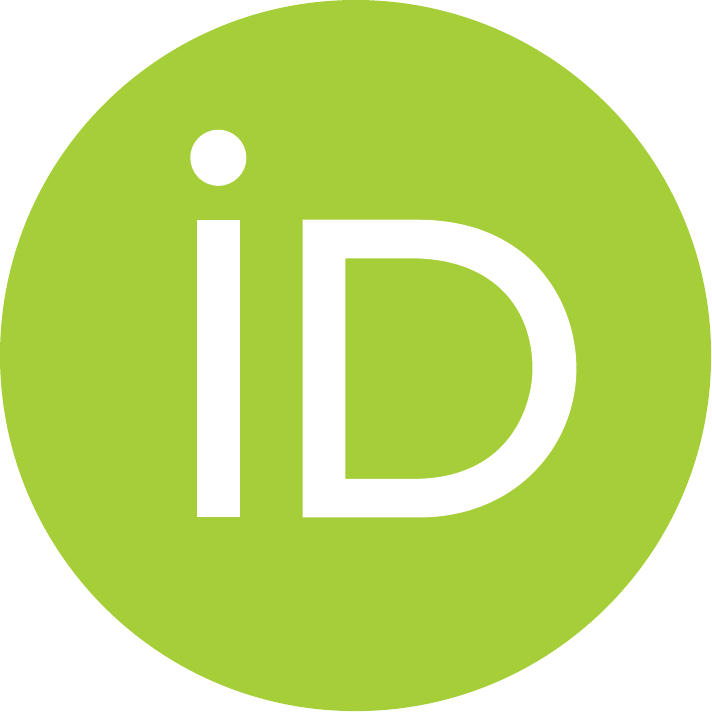}\hspace{1mm}Ricardo Euler
    \footnote{The authors work was partly conducted within the Research Campus MODAL funded by the German Federal Ministry of Education and Research (BMBF) (fund number 05M20ZBM) and partly funded by  the Federal Ministry of Transport and Digital Infrastructure (BMVI) under the project no. 19E17001C. The funding sources were not involved in any form in the preparation of this manuscript.   }
    }
    \href{https://orcid.org/0000-0002-8337-4387 }{\includegraphics[scale=0.06]{orcid_id_icon.pdf}\hspace{1mm}Niels Lindner}
        \href{https://orcid.org/0000-0001-7223-9174}{\includegraphics[scale=0.06]{orcid_id_icon.pdf}\hspace{1mm}Ralf Borndörfer}\\
	Network Optimization \\
	Zuse Institute Berlin\\
	Germany, Berlin, 14195\\
	\texttt{\{euler,lindner,borndoerfer\}@zib.de}\\ }
\begin{document}

\maketitle

\begin{abstract}
We consider the \textit{price-optimal earliest arrival problem} in public transit (\POEAP) in which we aim to calculate the Pareto-set of journeys with respect to ticket price and arrival time in a public transportation network.
Public transit fare structures are often a combination of various fare strategies such as, e.g., distance-based fares,  zone-based fares or flat fares.
The rules that determine the actual ticket price are often very complex.
Accordingly, fare structures are notoriously difficult to model, as it is in general not sufficient to simply assign costs to arcs in a routing graph.
Research into \POEAP{} is scarce and usually either relies on heuristics or only considers restrictive fare models 
that are too limited to cover the full scope of most real-world applications. We therefore introduce \textit{conditional fare networks} (CFNs), the first framework for representing a large number of real-world fare structures.
We show that by relaxing label domination criteria, CFNs can be used as a building block in label-setting multi-objective shortest path algorithms.
By the nature of their extensive modeling capabilities, optimizing over CFNs is NP-hard.
However, we demonstrate that adapting the  multi-criteria RAPTOR (\MCRAP) algorithm for CFNs yields an  
algorithm capable of solving \POEAP{} to optimality in less than 400 ms on average
on a real-world data set.
By restricting the size of the Pareto-set, running times are further reduced to below 10 ms.
\end{abstract}

\keywords{multi-objective shortest path,
fare structure, public transportation,
monoid, conditional fare network,
ticket graph, RAPTOR}

\competinginterest{none}

\section{Introduction}
The desired shift to more sustainable means of transportation necessitates an increase in the modal share of public transportation systems.
Recent studies show that discounted or even free fares significantly increase such a system's adaptation \citep{Brough2022,Bull2021,Chen}, indicating that, from a traveler's perspective, the design of fares and ticket prices are key parameters to foster its attractiveness. 
This effect, however, varies between socioeconomic groups: While many riders may value fast connections with few transfers, low-income riders are more likely to choose a more affordable mode of public transportation or to even abstain from using transportation at all if they deem fares to be too expensive \citep{Blumenberg2014, Rosenblum2020}.
This underlines the need for routing algorithms that enable passengers to select journeys optimal with respect to their individual needs.
Since these needs are rarely known, we present an approach that calculates Pareto sets of optimal journeys with respect to the earliest arrival time, the number of transfers and cost.
It is then on the user to choose from the options presented the one most suited for their personal needs.

Ticket prices are determined by the public transit providers' fare structure.
Following \citet{fleishman1996fare} a \textit{fare structure} is \enquote{the combination of one or more
fare strategies with specific tickets} while the term
\textit{fare strategy} refers to a \enquote{general fare collection and payment structure approach}.
This can be, e.g, a distance-based  or zone-based fare, a short-distance discount, a flat fare or a surcharge.
Until now, a unifying framework for the algorithmic treatment of public transit fare structures has been lacking.
Some fare strategies, for example distance-based fares, can be addressed easily using label-setting shortest path algorithms.
Finding a journey that crosses the least amount of fare zones, however, is NP-hard \citep{BlancoBorndoerferHoangetal.2016} and can not be modeled using real-valued arc weights.
In general, the subpath optimality principle does not hold for fare structures and thus label-setting algorithms can not be directly applied to \POEAP.
Consider the following example: A traveler takes a detour that passes through an additional fare zone to avoid paying the surcharge of a special connection (e.g., a ferry). 
It is not unlikely that, at a later point, the surcharge has to be paid regardless (e.g., because the target stop can only be reached via a ferry).
In that case, taking the detour was a suboptimal decision and using the corresponding label to prune other partial journeys breaks the optimality guarantee of shortest path algorithms.

\subsection{Our Contribution}
We devise algorithms that solve the price-optimal earliest arrival problem (\POEAP{}) efficiently in practice.
To this end, we build upon the state-of-the-art public transport routing algorithms \MCRAP\ \citep{DBLP:journals/transci/DellingPW15} and \TightBMRAP\ \citep{doi:10.1137/1.9781611975499.5}, which combine an intelligent enumeration scheme with dominance checks.
Our domination rules are based on \textit{conditional fare networks} (CFN), a novel and flexible framework for modeling fare structures of public transportation providers capable of taking most underlying fare strategies into account.
A CFN models fare structures as a \textit{ticket graph} representing \textit{relations} between tickets.
Transitions between different tickets are modeled as directed arcs and usually depend on a number of additional parameters such as, e.g., fare zones or the traveled distance. These are modeled via partially ordered monoids and events.
Fare strategies that can be expressed via CFNs include (but are not limited to): zone-based fares, distance-based fares, surcharges for special vehicles or night liners, discounted short-distance fares, 
transfer fares and all combinations thereof. 
We develop domination rules for CFNs based on path relations in the ticket graph instead of the price alone. This  allows us to retain 
subpath optimality and prove that using these rules in label-setting MOSP algorithm does in fact yield lowest-price journeys.
By further restricting the size of the Pareto-set, we solve \POEAP{} in less than ten milliseconds over the intricate fare structure of a mid-sized public transit provider from Germany.

\subsection{Related Literature}\label{sec:rellit}

There is ample research on routing problems in public transit networks. For an overview, see \citet{Bast2016RoutePI}.
\citet{DBLP:journals/transci/DellingPW15} introduced the \RAP{} algorithm for very fast public transit routing.
We refer occasionally to typical label-setting multi-objective shortest path algorithms,
by which we mean, e.g., 
\MCRAP\ \citep{DBLP:journals/transci/DellingPW15}, Martins' algorithm \citep{martins1984multicriteria}, and recently
Multi-Objective Dijkstra \citep{MaristanydelasCasasSedenoNodaBorndoerfer2021}.
\citet{doi:10.1137/1.9781611975499.5} introduce a version of \MCRAP, \TightBMRAP , for computing restricted Pareto-sets. 

The concept of relaxed subpath optimality is discussed in \citet{Berger_subpath-optimalityof}.
Lastly, we refer to \citet{Disser2008MulticriteriaSP} on how to model public transit systems with transfers in time-dependent graphs.

In contrast to the general activity of the field, literature on price-optimal routing is generally rather scarce.
This is certainly due to the usually intricate nature of public transit fare structures.
Most approaches deal with fares on a heuristic basis or only consider a very narrow set of fare strategies.
Most notably, \citet{HannemannSchneePayingLess} study fare structures that entail distance- and relation-based prices, i.e.,
structures that are usually associated with long-distance public transportation.
They approximate fares by assigning a fixed price to every arc. 
This approach, however, does not account for fare strategies such as, e.g.,  fare zones and short-distance discount tickets.
Both are usually more prominent in local public transportation.

\citet{Reinhardt2011} consider optimizing the number of fare zones as a special case in their study of non-additive objective functions in (multi-criteria) shortest path problems.
However, their approach relies on target pruning as the sole domination technique, so that partial paths cannot be pruned until a $s$,$t$-path is known. In contrast, our approach applies to more general fare structures, includes target pruning, but also allows for pruning at earlier stages.

\citet{schoebel2021cheapest} identified conditions under which price-optimized routing is tractable for zone- and distance-based fare strategies.
Additionally, they identify the no-elongation and no-stopover properties as desirable properties for fare structures.
This follows a line of research concerned with the design of fare structures. For a review of recent work, see \citet{schoebel2021cheapest}.
\citet{BlancoBorndoerferHoangetal.2016} showed zone-based fares to result in NP-hard routing problems if reentering a zone does not entail additional costs.
The proof relies on a reduction of the problem of finding a path with a  minimum number of colored edges \citep{broersma2005paths}.
It was given in the context of flight trajectory optimization with overflight costs \cite[]{BlancoBorndoerferHoangetal.2017}.
A version adapted for public transport is given by \citet{schoebel2021cheapest}.
\citet{DBLP:journals/transci/DellingPW15} used \RAP{} to compute journeys that touch the smallest number of fare zones.
Recently, \citet{GundlingDissertation2020} considered price-optimized routing in intermodal transportation. They, however, only considered mileage-based and flat fares.

Our approach combines ideas from automata theory and optimization over monoids.
For an overview of automata theory, see \citet{Hopcroft+Ullman/79/Introduction}.
The ticket graph concept is inspired  by the application of finite automata to the language-constrained shortest path problem \citep{Barrett:2000:FPP:586846.586970}. It is different, however, in that it serves to evaluate paths instead of restricting the set of feasible paths. Furthermore, our approach also covers fares based on numerical parameters that are not expressed as part of a formal language.
Finally, it has been known for a while that shortest path algorithms can be generalized to ordered monoids \citep{ZimmermannOrderedCombinatorialOptimization}  and  semirings \citep{Mohri:2002:SFA:639508.639512} in a straightforward fashion. Recently, monoids were also proposed as a general constraint model for (single-criteria) resource constrained shortest path problems \citep{Parmentier2019a}.

This paper is an extended and improved version of work presented at the ATMOS'19 conference \citep{euler_et_al:OASIcs:2019:11424}. 
Apart from streamlining the presentation and proofs, the following additions were made:
We now better motivate the interplay between the monoid and the ticket graph.
We propose a superior approach for dealing with overlap areas.
We provide an adaption of the recent Tight-BMRAP algorithm to our use case.
This leads to an improvement in algorithmic performance of up to two orders of magnitude compared to the previous results in \citet{euler_et_al:OASIcs:2019:11424}.
Finally, we provide a complexity analysis and investigate the relation of our approach to automata theory
in \ref{sec:complexity}.

Ticket graphs of various German public transit providers can be found in \citet{BorndoerferEulerKarbsteinetal.2018} and \citet{BorndoerferEulerKarbstein2021}.

\subsection{Overview}
In Section \ref{section:MDV}, we introduce the fare structure of MDV, an association that is responsible for the public transit fares for various operators in the Leipzig-Halle region of Germany. This fare structure will serve as a running example for the rest of the paper.
We present conditional fare networks in detail in Section \ref{sec:framework} and show how they can be used to model various aspects of fare structures.
The algorithmic treatment of fares and domination rules is laid out in Section \ref{sec:algo}. 
Section \ref{sec:psrap} discusses how the multi-criteria \RAP\ and \TightBMRAP{} algorithms can be modified to use CFNs for price-optimal search.
An evaluation of the framework's performance is conducted in Section \ref{sec:results} using the network and fare structure of MDV.
Section \ref{sec:conclusion} concludes the paper with some closing remarks. 
In \ref{sec:complexity}, we provide a supplementary complexity analysis of \POEAP{} and explore links to automata theory.

\section{Running Example: MDV}\label{section:MDV}

We introduce the reader to some intricacies of fare structures in public transit using the example of \textit{Mitteldeutscher Verkehrsverbund} (MDV) \cite{MDVFare2019}.
Throughout Sections \ref{sec:framework} and \ref{sec:algo}, the MDV fare structure will serve as a running example to illustrate our core concepts.  
A schematic depiction of MDV's fare plan is given in Figure \ref{fig:mdv_plan}.

\begin{example}[The Fare System of MDV]\label{example:mdv_fares}
MDV's  area of operations covers large rural areas in Eastern Germany, as well as the conurbation of Halle and Leipzig. As of 2019, this area is divided into a set of 56 pairwise disjoint fare zones.
In most cases, the price depends on the number of visited fare zones: there are price levels for one to six fare zones.
We denote the respective tickets by $\mdvz{i}$ with $i\in [6]$. 
For example, traveling from station $\station{A}$ to station $\station{L}$ in Figure \ref{fig:mdv_plan} requires ticket $\mdvz4$.
For all paths covering more than six fare zones, a ticket for MDV's whole area of operations has to be purchased, which we denote by $M$.
The two larger cities, Halle and Leipzig, each form a single fare zone. Travelling in these zones requires special tickets more expensive than $\mdvz1$.
These, we denote by $\mdvh$ and $\mdvl$, respectively.

\begin{figure}[t]
    \centering
    \begin{tikzpicture}
    \footnotesize
    \tikzset{
        zonebox/.style={thick},
        city/.style={fill=gray!30},
        neutralcleaner/.style = {thick,regular polygon, regular polygon sides=6,draw, fill=white,minimum size=1.2cm},
        neutral/.style={ thick,regular polygon, regular polygon sides=6,draw, fill=lightgray, minimum size=1.2cm, pattern=north west lines, pattern color=lightgray},
        mainstation/.style={},
        town/.style={circle, minimum size=1.7cm ,draw, thick, pattern=horizontal lines,pattern color=lightgray},  
        station/.style={draw,circle, minimum size=0.133cm, fill=black, inner sep=0},
        line/.style={draw, line width=0.04cm}}


    \draw[zonebox,city] (0,0) -- (0,1.33)
            -- (.66,2)
            -- (2,2)
            -- (2.66,1.33) 
            -- (4.33,1.33) 
            -- (4.33,.66) coordinate (contact_top_1)  
            -- (2.66,.66) coordinate (contact_top_2)  
            -- (2.66,0) coordinate (contact_top_3)  
            -- (1.33,-1.33) coordinate (contact_top_4)  coordinate (neutral_1)
            -- (0,0)
            -- (0,1.33);
    \draw[zonebox]
            let \p1 = (contact_top_1) in
              (contact_top_4)
              -- (4.33,-1.33)
              -- (4.33,-2.66)
              -- (6,-2.66)  coordinate (contact_bottom_1)
              -- (6,-0.66)  coordinate (neutral_2)
              -- (6,\y1) coordinate (contact_top_six)
              -- (\p1) ;
    \draw[zonebox] let \p1 = (contact_bottom_1), \p2 = (contact_top_six) in
            (\p2) 
            -- (9.33,\y2) coordinate (neutral_3)
            -- (9.33,-2) coordinate (city_two_corner_1)
            -- (8.66,-2.66) coordinate (city_two_corner_2)
            -- (\p1);
    \draw[zonebox,city] let \p1 = (city_two_corner_1), \p2 = (city_two_corner_2)  in
               (\p2)
            -- (\p1)
            -- ($ (\p1) + (2.66,0)$)
            -- (12,-5.33)
            -- (\x2,-5.33 ) coordinate (contact_bottom_2)
            -- (\p2);
    \draw[zonebox] let \p1 =(city_two_corner_2), \p2 = (contact_bottom_2) in
                (\p2)
            -- (4,\y2) coordinate (contact_bottom_3)
            -- (4,\y1) coordinate (contact_bottom_4)
            -- ( $0.3*(4,\y1) + 0.7*(\p1)$ ) coordinate (neutral_4)
            -- (\p1);
    \draw[zonebox] let \p1 = (contact_bottom_3), \p2 = (contact_bottom_4) in
            (\p2)
            -- (\p1)
            -- (-1.33,\y1)
            -- (-1.33,0)
            -- (0,0);

    \foreach \i in {1,...,4}
    {
    \node[neutralcleaner] at (neutral_\i) {};
    \node[neutral] at (neutral_\i) {};
    }

    \node[town] (merseburg) at (1.33,-4) {};

    \draw[line]
       (1.33,1.33)     node[station] (L1_1)  {}
    -- (1.33,0.833)  node[station] (L1_2)  {}
    -- (1.33,0.33)   node[station] (L1_3)  {}
    -- (1.33,-0.166) node[station] (L1_4)  {}
    -- (1.33,-0.6)    node[station] (L1_5)  {}
    -- (1.33,-1.33)    node[station] (L1_6)  {}
    -- (1.33,-3.533)  node[station] (L1_7)  {}
    -- (1.33,-4.06)  node[station] (L1_8)  {}
    -- (1.33,-4.466)  node[station] (L1_9)  {}
    -- (3.33, -4.466) node[station] (L1_10)  {}
    -- (6.66,-4.466) node[station] (L1_11)  {}
    -- (11.33,-4.466) node[station] (L1_12)  {};

    \draw[line] let \p1 = (neutral_4),\p2 = (neutral_2), \p3 = (neutral_3) in
       (11.33,-2.33) node[station]  (L2_1)  {}
    -- (11.33,-2.833) node[station] (L2_2)  {}
    -- (11.33,-3.33) node[station]    (L2_3)  {}
    -- (8,-3.33) node[station]    (L2_4)  {}
    -- (\p1)     node[station]  (L2_5)  {}
    -- (\p2)     node[station]  (L2_6)  {}
    -- ( $0.5*(\p2) + 0.5*(\p3)$ ) node[station] (L2_7)  {}
    -- (\p3)     node[station]  (L2_8)  {};
    \draw[line,dotted] (L2_3) -- (L1_12);

    \node[left  = of L1_1,  xshift=1cm,  font=\bfseries]  {$\mathcal{ A }$};
    \node[left  = of L1_3,  xshift=1cm,  font=\bfseries]  {$\mathcal{ B }$};
    \node[left  = of L1_5,  xshift=1cm,  font=\bfseries]  {$\mathcal{ C }$};
    \node[left  = of L1_6,  xshift=1cm,  font=\bfseries]  {$\mathcal{ D }$};
    \node[left  = of L1_7,  xshift=1cm,  font=\bfseries]  {$\mathcal{ E }$};
    \node[left  = of L1_9,  xshift=1cm,  font=\bfseries]  {$\mathcal{ F }$};
    \node[above = of L1_10, yshift=-1cm, font=\bfseries]  {$\mathcal{ G }$};
    \node[below = of L1_12, yshift=1cm,  font=\bfseries]  {$\mathcal{ H }$};
    \node[right = of L2_3,  xshift=-1cm, font=\bfseries]  {$\mathcal{ I }$};
    \node[right = of L2_5,  xshift=-1cm, font=\bfseries]  {$\mathcal{ J }$};
    \node[left  = of L2_6,  xshift=1cm,  font=\bfseries]  {$\mathcal{ K }$};
    \node[right = of L2_1,  xshift=-1cm, font=\bfseries]  {$\mathcal{ L }$};
    \node[above = of L2_8,  yshift=-1cm, font=\bfseries]  {$\mathcal{ M }$};
        
    \coordinate (lb_hall) at (.66,2);
    \path let \p1 = (city_two_corner_1)  in
        ($(\p1) + (0,0)$) coordinate (lb_leip);
    \node[below right = of lb_hall, yshift=1cm, xshift=-1cm, font=\bfseries] {Halle};
    \node[below right = of lb_leip, yshift=1cm, xshift=-1cm, font=\bfseries] {Leipzig};
    \node[below = of merseburg, yshift=1cm, font=\bfseries] {Merseburg};
    
    \coordinate (one_zone) at (-1.33,0);
    \node[below right = of one_zone, yshift=1cm, xshift=-1cm, font=\bfseries]  {233};
    \node[below right = of contact_bottom_4, yshift=1cm, xshift=-1cm, font=\bfseries] {156};
    \node[below right = of contact_top_2, yshift=1cm, xshift=-1cm, font=\bfseries] {225};
    \node[below right = of contact_top_six, yshift=1cm, xshift=-1cm, font=\bfseries] {162};
    \end{tikzpicture}    
    \caption{A section of MDVs fare plan with two lines and six fare zones.
    Two of the fare zones, colored in light gray, are the cities of Halle and Leipzig.
    Vertically hatched hexagons represent overlap areas that can be counted as either of the neighboring zones. 
    The horizontally hatched circle represents the small city Merseburg in which a special discounted fare is applicable.
    Small black nodes represent public transit stops. Footpaths are indicated by dotted lines.
    }
    \label{fig:mdv_plan}
\end{figure}

For all paths that pass through multiple fare zones, they, however, count as normal zones, i.e., one of the tickets $\mdvz2, \dots, \mdvz6, \mdvmax$ is applied.
Hence, the paths $\station{A}-\station{C}$ and  $\station{H}-\station{L}$ incur tickets $\mdvh$ and $\mdvl$, respectively, while  the path $\station{A}-\station{G}$ incurs $\mdvz2$.
Several smaller cities are part of larger fare zones, but allow for discounted fares (city fares) when traveling only in that city.
For each city $\city$  in the set of such cities $\cities$, we denote the ticket by $\mdvt{\city}$. 
The path $\station{E}-\station{F}$ in Merseburg ($m$), hence, requires the ticket $\mdvt{m}$. When extending the path to stop $\station{G}$, the ticket $\mdvz1$ becomes applicable.
As of 2019, there are 17 cities with city fares and two price levels (which we denote by $\mdvt1$ and $\mdvt2$).
For paths starting in Halle and Leipzig, there are discounted tickets for short trips ($\mdvkh$ and $\mdvkl$), which can be used for a maximum number of four stops without transfers.
Hence, paths $\station{A}-\station{B}$ and $\station{I}-\station{L}$ are admissible for discounted tickets $\mdvkh$ and $\mdvkl$, respectively, while paths $\station{A}-\station{C}$ and $\station{H}-\station{L}$ are not.
Discounted tickets also exist for other zones ($\mdvdis$). 
These are a little cheaper and depend on the length of the journey (4 km maximum) instead of the number of visited stops. 
Sometimes it is possible to choose between city fares and length-based discounts. In this case, the city fare is applied because it is cheaper.
To not unduly burden people living at the borders of fare zones, MDV uses overlap areas. 
These can be counted as part of either of their adjacent fare zones, whichever is most benevolent to the traveler.
For example, when traveling from $\station{J}$ to $\station{M}$, all stops are counted as part of fare zone $162$ and thus ticket $\mdvz1$ would be applicable.
When traveling from $\station{E}$ to $\station{D}$, $\station{D}$ counts as part of the fare zone  $233$ but in the path $\station{A}-\station{D}$ it counts as part of Halle.
Hence, tickets $\mdvz1$ and $\mdvh$ are applicable, respectively.

While we cover the most important features of the fare structure, we do ignore some edge cases and explicit exceptions. 
These are among other things: Slightly different discount rules for specific trains, counting stations that are passed without a stop for discounted tickets and exceptions for a specific tunnel.
This is done in part because they are not properly reflected in our data set, and in part to simplify presentation.
\end{example}

\section{A Formal Framework for Fare Structures}\label{sec:framework}
Consider a (directed) routing graph $G=(V,\rarcs)$, in which arcs represent either public transport connections, footpaths, or transfers between lines and/or modes
of transportation. Public transit journeys can then be interpreted as paths in $G$.
In the following, we will consider a time-dependent formulation as presented, for example, by \citet{Disser2008MulticriteriaSP},
i.e., we are given a time-dependent FIFO travel time function $c(a): I \rightarrow I$ on each arc $a \in A$, where $I$ is the set of time points.

In $G$, every path $p$ is associated with a ticket $\ticket \in \tickets$ from a ticket set $\tickets$ that has to be bought to use $p$. Each
ticket has a corresponding price $\price(\ticket) \in \mathbb{Q}^+$.
In the following, we might also write $\price(p)$ instead of $\price(\ticket)$ if $\ticket$ is the ticket associated with $p$.
The ticket of a path is determined by the fare structure.

We aim to solve the \textit{price-optimal earliest arrival problem} (\POEAP).
We refer to Definition~\ref{def:poeap} for a precise definition, but the essence is that, for given $s,t\in V$, we want to find a Pareto-set of $s,t$-paths $\pprice \subseteq \stpaths$  with respect to arrival time and ticket price in $G$. Here, $\stpaths$ denotes the set of all $s,t$-paths in $G$.

Ideally, we want to solve \POEAP{} by taking advantage of the existing literature on label-setting MOSP algorithms.
Ticket prices, however, usually cannot be modeled via real-valued FIFO functions on arcs.
Hence, a framework for fare structures is needed that allows us to label paths in a way that a) 
labels can be updated quickly when a new arc is relaxed, 
b) dominance relationships between labels can be established that respect the subpath optimality property.

\subsection{Modeling with Monoids}\label{subsec:modelmonoids}

Note that in Example \ref{example:mdv_fares}, the price of a path depends on several  parameters: 
the number of visited stations, the total distance traveled, the set of visited fare zones 
and on indicators reporting whether transfers were made or whether the path crossed city borders.  

All these parameters share several key properties:
First, there is a natural partial order on them, indicating which configuration requires a more expensive ticket.
For example, for fare zones $A,B,C$ we have  $\{A,B \} \subset \{A,B,C \}$; for distances and transfers it is the canonical order on $\N$.
The parameters can be summed up along a path using an appropriate notion of addition.
For distances, this is the normal addition of natural numbers; for fare zones, it is the union of sets; we can use the logical \textit{OR} ($\vee$) on the set $\{0,1\}$ for indicators.
Finally, we can assume the existence of a neutral element for every parameter.

The above properties suggest that the structure of a \textit{partially ordered positive monoid} is an appropriate model for a large number of fare-relevant parameters.

\begin{definition}[Partially ordered monoid]
    A \emph{monoid} $(\monoidset,+)$ is a set $H$ together with an associative operation $+$ (called addition) and a neutral element $\neutr\in\monoidset$, i.e., $h+\neutr = h \Forall h\in H$.
        We call $(H,+,\leq)$ a \emph{partially ordered monoid} if $\leq$ is a partial order on $H$ that is translation-invariant with respect to the monoid operation $+$, i.e.,  
        ${h_1 \leq h_2 \Rightarrow h_1+x \leq h_2 +x  \Forall h_1,h_2,x \in H}$.
        If additionally $\neutr\leq h \Forall h \in H$, we call $(H,+,\leq)$ a \emph{partially ordered positive monoid}.
\end{definition}
Note that we can define the cross-product of two partially ordered monoids $(H_1,+_1,\leq_1)$ and $(H_2,+_2\leq_2)$ by $(H_1 \times H_2, +_{12} ,\leq_{1,2})$, 
where $(h_1,h_2) +_{1,2} (i_1,i_2) := (h_1 +_1 i_1, h_2 +_2 i_2)$ and $(h_1,h_2) \leq_{1,2} (i_1,i_2)$ if and only if $h_1\leq i_1$ and $h_2 \leq i_2$ for $h_1,i_1\in H_1$
and $h_2,i_2\in H_2$.
The cross-product of two partially ordered positive monoids is again a partially ordered positive monoid.
This construction allows us to represent all the fare-relevant parameters in Example~\ref{example:mdv_fares} above as a single partially ordered positive monoid $(H,+,\leq)$.

Price-optimal paths can then be found in the following way:
We label each arc $\rarc \in \rarcs$ with a weight in $\monoidset$ representing the relevant parameters on this arc.
The  weight  of a path $p$, denoted by  \wfs$(p)$,  lives in $\monoidset$ as well and can be obtained by summing up the weights of the arcs of $p$.
The ticket $\ticket$ for $p$ and its price can then be derived from $\wfs(p)$ using the rules of the fare structure.
Finding a price-optimal $s,t$-path with $s,t\in V$ can now be achieved by finding the Pareto-set of $s,t$-paths with regard to the partial order of $\pmonoid$.
Here, we can apply a label-setting MOSP algorithm such as, e.g., Martins' algorithm, by using elements of $\monoidset$ as labels and the partial order of $\pmonoid$ to establish dominance between labels
\cite{Parmentier2019a}.
Note, however, that this set will likely still contain many dominated paths with respect to price.
This necessitates the filtering out of superfluous paths in a post-processing step.

\begin{example}[MDV]\label{example:monoid}
For MDV, we can construct a monoid in the following way:
for each city $\city \in \cities$ we define the monoid $(\citymonoid := \{0,1\},\vee,\leq)$ as an indicator whether our path started in $c$ and 
then left the city. Hence, all arcs representing a connection leaving the city carry the weight $1\in \citymonoid$.
Furthermore, we represent the distance traveled by the monoid $(H_{dist} := \N,+,\leq)$, the number of visited stations by $(\stopmonoid:= \N,+,\leq)$,
the set of fare zones by $(\zonemonoid := 2^Z, \cup, \subseteq)$ and finally the transfers by $(\transfermonoid := \{0,1\},\vee,\leq)$.
Price-optimal paths can then be computed by finding the Pareto-set over the monoid
$(\distmonoid\times \stopmonoid \times \transfermonoid \times \zonemonoid \times \prod_{\city\in \cities} \citymonoid, +,\leq)$ and filtering out dominated paths in a post-processing step.
Here, $+$ and $\leq$ are induced from the component monoids.
\end{example}

While the above modeling approach covers a reasonable set of real-world applications, it is not expressive enough to be of much use for complex fare structures.
First, note that it requires an order-preserving relationship between the monoid and the ticket prices.
This assumption does not hold in general: For example, some public transit associations (e.g., in the city of Bremen, Germany, before 2020 \cite{VBN2019}) apply night surcharges on selected lines.
This means that, e.g., in the early morning, it can be beneficial to start a journey later because it becomes cheaper. In particular, it is necessary to capture time in the monoid, but the mapping between time and price does not preserve order.
Second, monoids often cannot express logical pricing conditions: For example, a price depending on the order on which stops are visited cannot be modeled using the monoid-based approach.
Third, shortest path search over the monoid is agnostic to the fare structure and therefore tends to consider unnecessarily large search trees.
In the MDV case, if we already know an $s,t$-path for which, e.g., the ticket $\mdvkl$ is applicable, all partial paths starting in $s$ that require a more expensive ticket can be pruned even though they might not be dominated w.r.t to $\leq$.
To do this, however, we must find a way to quickly obtain the corresponding ticket to labels from $\monoidset$.

Finally, note that the labels of a label-setting MOSP algorithm live in $\monoidset$ and might be quite large.
In most cases, however, it is not necessary to carry the whole label along. 
Consider again the MDV case. If a path has left a city $\city$, all city fares become unavailable and labels for all monoids $\citymonoid, \city \in \cities$, need no longer be considered.
This information, however, remains unavailable to an algorithm using the monoid-based model.

Computational results in Section \ref{sec:results} reveal that purely monoid-based modeling quickly becomes intractable, even when only considering the fare zone monoid $(\zonemonoid,\cup, \subseteq)$.

\subsection{Ticket Graphs and Fare Events}

To overcome the challenges laid out in the previous section, we extend our modeling in two directions:
First, we want to take the path's ticket into account. To do so, we develop a model to represent tickets and their relationships.
Second, we introduce \emph{events} that model logical fare strategies. They also serve to reduce the size of the monoid.
Addressing the first point, we notice that there is a natural progression of the applicable ticket along a path $p$.
If $p$ is short, a short-distance ticket might suffice. When $p$ is extended by adding another stop $v$ at its end, this ticket might no longer be applicable and now, e.g., a zone ticket might apply. When traveling even further, a ticket covering two zones might be needed.
Hence, we can relate tickets to each other via their ability to transition into one another along paths in the routing graph.
We formalize this observation by introducing a \textit{ticket graph}  $\ticketgraph=(\tickets,\tarcs)$
that contains an arc $\tarc=(\ticket_1,\ticket_2)\in \tarcs$ if ticket $\ticket_1$ can transition into ticket $\ticket_2$. 
Transitions depend on the weight $\wfs(p)$ and don't occur at every stop.
Hence, we introduce a ticket transition function that checks $\wfs(p)$ and selects the appropriate ticket from the neighborhood of the ticket of $p$ in $\ticketgraph$.
The ticket graph provides crucial advantages over the purely monoid-based approach:
First, when we want to use price-based target-pruning (cf. Section \ref{sec:tp}), we must compute the current ticket price.
The ticket graph allows doing this by updating tickets along a path, thereby only checking a few transition conditions on the outgoing edges. Without the ticket graph, all  rules of the fare structure would need to be checked whenever a vertex is relaxed.
Second, the ticket graph also carries information about the possible further ticket price development of a partial path: 
This knowledge is useful to design dominance rules.

\begin{example}[Running Example: Ticket Graph for MDV]\label{example:ticketgraph}
\begin{figure}[t]
\centering

\begin{tikzpicture}
	\farenode{z1}{ (4,0) }{$Z1$}{};
    \farenode{z1}{ (4,0) }{$Z1$}{};
	\farenode{z2}{ (4,2) }{$Z2$}{};
	\farenode{z3}{ (4,4) }{$Z3$}{};
	\farenode{z4}{ (2,6) }{$Z4$}{};
	\farenode{z5}{ (4,6) }{$Z5$}{};
	\farenode{z6}{ (6,6) }{$Z6$}{};
	\farenode{m}{ (8,6) }{$M$}{};		
    \farenode{t1}{ (6.75,1.25) }{$C1$}{fill=lightgray};
	\farenode{t2}{ (8, 0 )}{$C2$}{fill=lightgray};
	\farenode{k}{ (8,4) }{$D$}{fill=lightgray};
	\farenode{h}{ (0,2) }{$H$}{};
	\farenode{kh}{ (0,0) }{$D_H$}{fill=lightgray};
	\farenode{l}{ (0,4) }{$L$}{};
	\farenode{kl}{ (0,6 )}{$D_L$}{fill=lightgray};
	
  	\farearc {z1}  {z2} {}{0.2}{0};
  	\farearc {z2}  {z3} {}{0.2}{0};
  	\farearc {z3}  {z4} {}{0.4}{1};
  	\farearc {z4}  {z5} {}{0.2}{0};
  	\farearc {z5}  {z6} {}{0.2}{0};
  	\farearc {z6}  {m}  {}{0.2}{0};	
	\farearc {l}  {z2} {} {0.4} {0};		
	\farearc {h}  {z2} {} {0.4} {0};		
	\farearc {t1} {z1} {} {0.7} {1};			
	\farearc {t2} {z1} {} {0.4} {1};

    \farearc {k}  {z1} {} {0.4} {1};	
    \farearc {k}  {z2} {} {0.5} {1};
    \farearc {k}  {z3} {} {0.25} {1};
    \farearc {k}  {z4} {} {0.25} {1};		
    \farearc {kl} {l} { } {0} {0};	
    \farearc {kh} {h} {} {0.2}{1}; 
    \farearc {kh} {z2} {} {0.3} {0}; 
    \farearc {kl} {z2} {} {0.5} {0};
    \farearc {t2} {k} {} {0.1} {0};
    \farearc {t1} {k} {} {0.1} {1};
\end{tikzpicture}

\caption{Ticket Graph associated with the MDV public transit network. To simplify the presentation, all tickets for city fares are collapsed to $\mdvt1$ and $\mdvt2$ representing the two 
price levels of city fares.
Possible starting tickets are highlighted in light gray.   
\label{fig:mdv_tgraph}}
\end{figure}
Consider the ticket graph in Figure \ref{fig:mdv_tgraph}. All possible tickets introduced in Example \ref{example:mdv_fares} are represented as nodes.
Whenever, a ticket can transition into another one, we introduce an arc. Note that, e.g., a discounted ticket for Halle $\mdvkh$ can never transition into a Leipzig ticket $\mdvl$.
Now, consider again the path $\station{A}-\station{C}$ in Figure \ref{fig:mdv_plan}. This path requires a ticket covering 
one fare zone. Appending the station $\station{D}$ will require a ticket covering two fare zones.
This is modeled by performing a ticket transition in the ticket graph along the edge $(\mdvz{1},\mdvz{2})$
induced by the ticket transition function for $\mdvz{1}$.
When calculating the ticket price of a path that contains a subpath with ticket $\mdvz{3}$ now only a condition on the number of fare zones needs to be checked to determine whether $\mdvz{3}$ or $\mdvz{4}$ is applicable. Without using the ticket graph, e.g., the applicability of all discounts would need to be recalculated.
Furthermore, when comparing two paths having, say, tickets $\mdvz{2}$ and $\mdvz{3}$, respectively, a dominance check can safely mark the path with ticket $\mdvz{3}$ as dominated even if it is shorter in distance. In the purely monoid-based approach, both paths would be nondominated.
\end{example}

To reduce the dimension of the monoid, note that the parameters determining the ticket of a path can be broadly categorized as \textit{(fare) states}, e.g.,
number of stops, and \textit{(fare) events}, e.g., a transfer or the boarding of a train that requires a surcharge.
Up to now, events were included in the monoid via indicators.
However, this is not strictly necessary: A transfer arc could cause a ticket transition in the ticket graph.
Thereafter, transfers might be ineffectual and hence it is unnecessary to record them in the monoid.
To do so, we need to annotate arcs in the routing graph not only with elements of a  monoid but also with events.
The distinction between \textit{states} and \textit{events} introduces some flexibility to the modeling, as sometimes aspects of a fare structure can be modeled as both. 
However, we naturally aim to keep the monoid as low-dimensional as possible.
Using events, prices may now also depend on logical conditions, e.g., the order of events, which is not possible in the monoid-based approach.

\begin{example}[Running Example: Fare Events for MDV]\label{example:symbols}
The  monoid in Example \ref{example:monoid} was introduced as $(\distmonoid\times \stopmonoid \times \transfermonoid \times \zonemonoid \times \prod_{\city\in \cities} \citymonoid, +,\leq)$.
We can see any transfer as an event $\eventtrans$ occurring on a transfer arc of the routing graph. 
In the same way,  leaving any city $\city \in \cities$ can be seen as an event $\eventcity$ occurring on arcs
crossing the city's borders. Hence, we can shrink the  monoid to $(\distmonoid\times\stopmonoid\times \zonemonoid , +,\leq)$,
reducing the size of a label in a MOSP algorithm by 17 entries for cities and by one entry for transfers.
Finally, we introduce events $\eventhalle$ and $\eventleipzig$ for public transit arcs ending in the special fare zones of Halle and Leipzig, respectively. 
This is purely a design decision, since the information could also be read from the state of $\zonemonoid$.
We obtain a set of events $\fareevents =  \{ \eventcity,\eventtrans,\eventhalle,\eventleipzig, \nullevent \}$ where $\nullevent$ is a dummy event with no effect, see also Example~\ref{example:mdv_cfn}.
\end{example}

\subsection{Conditional Fare Networks}

We now combine the three core ideas of a ticket graph, fare events and modeling with monoids to a formal model of  public transit fare structures.

Again, let $G=(V,A)$ be a routing graph.
Additionally, let $\pmonoid$  be a positive, partially ordered monoid, $\ticketgraph=(\tickets, \tarcs)$ a ticket graph and $\fareevents$ a set of fare events.
We label each arc $\rarc\in\rarcs$ with a weight $\wfa(\rarc)\in \monoidset$ and a fare event $\ef(\rarc)\in\fareevents$.
By collecting these weights and events along a path $p$ in $G$, we build its \textit{fare state} $\state(p)$.
\begin{definition}[Fare State]
A \emph{fare state} $\state\in \tickets\times\monoidset$ is a pair of a ticket $\tf(\state)$ and a weight $\wfs(\state)$.
We write $\statespace := \tickets\times\monoidset$ for the space of all fare states.
\end{definition}
Every vertex $v \in V$ is labeled with an \textit{initial fare state} $\startstate(v) \in \statespace $.
Fare states will serve as path labels for shortest-path algorithms.
They contain all the information necessary to decide domination between paths.
In contrast to common (multi-objective) shortest-path applications, the arc labels $\monoidset\times\fareevents$ live not in the same space as the path labels $\statespace$.

We now want to enable the tracking of fare states along paths in $G$.
To do so, we formalize the notion of the \textit{ticket transition function} on tickets $\ticket \in \tickets$ in the ticket graph $\ticketgraph = (\tickets,\tarcs)$.
A ticket transition function returns the ticket $\ticket_2\in \tickets$  a ticket $\ticket_1 \in \tickets$ transitions into given  an accumulated weight
$\monoidel \in \monoidset$ and a fare event $\fareevent \in \fareevents$. Possible candidates are the neighborhood of $\ticket_1$ in $\ticketgraph$ as well as $\ticket_1$ itself.

\begin{definition}[Ticket Transition Function]
    The ticket transition function $\transfunc: \tickets \times \monoidset \times  \fareevents \rightarrow \tickets$ of $\ticketgraph$ is a
    function that, given a weight $\monoidel\in\monoidset$ and event $\fareevent\in\fareevents$, maps each ticket $\ticket\in\tickets$ into
    its closed out-neighborhood $\delta^+(\ticket) \cup \{ \ticket \}$.
\end{definition}

The definition is intentionally kept as general as possible to capture a large number of possible transition conditions.
We use the notion of ticket transition functions to define the update of a fare state when relaxing an arc of the routing graph.

\begin{definition}[Fare Update Function]\label{def:updatefunction}
	Let $\state \in \statespace$ and $\rarc \in \rarcs$.
	Then,  the \textit{ fare update function}  ${\update: \statespace \times \rarcs \to \statespace}$ is given by 
$\altstate := \update(\state,\rarc)$ with
  \begin{align*}
  \wfs(\altstate)  &:= \wfs(\state) + \wfa(a) \\ 
	\tf(\altstate)  &:= \transfunc(\tf(\state),\wfs(\altstate),\ef(a)).
 \end{align*}    
\end{definition}

The fare state of a path $p=(v_1,\dots,v_n)$ can now be tracked  by letting $ \state_1 := \startstate(v_1)$
and $\state_i := \update(\state_{i-1},(v_{i-1},v_i))  \Forall i = 2 , \dots , n$.
In particular, when calculating $\state_{i}$ we need only consider the out-neighborhood of $\tf(\state_n)$ instead of reevaluating all fare strategies.

Combining all the above definitions, we arrive at the notion of \textit{conditional fare networks} which can precisely describe a fare structure.
\begin{definition}[Conditional Fare Network]
	Let $G=(V,A)$ be a routing graph and let the following be given:
	\begin{enumerate}
	    \item a directed acyclic ticket graph $\ticketgraph = (\tickets,\tarcs)$ with transition function $\transfunc$,
            \item arc weights $\wfa: \rarcs \to \monoidset$  from a partially ordered, positive monoid $\pmonoid$,
            \item arc events $\ef: \rarcs\to\fareevents$,
            \item initial fare states $\startstate: V \rightarrow \statespace$ and 
            \item a price function $\price: \tickets \to \mathbb{Q}_+$ that is monotonously non-decreasing along directed paths in $\tickets$, i.e., if there is 
                    a directed $\ticket_1-\ticket_2$-path in $\ticketgraph$ for $\ticket_1,\ticket_2\in \tickets$, then $\price(\ticket_1) \leq \price(\ticket_2)$. 
                    We write $\price(p)$ instead of $\price( \tf(\state(p)))$ for a path $p\in P$.
	\end{enumerate}
        We call the six-tuple  $\sixtuple$ a \emph{conditional fare network} $\mathcal{N}$ of $G$.
\end{definition}	
Note that cycle-freeness in $\ticketgraph$, the monotonicity condition on $\price$ and the positivity of $H$ ensure that no price-decreasing cycles exist in $G$.
We consider those assumptions natural enough that any reasonable fare structures should satisfy them. 

\begin{example}[Running Example: Conditional Fare Network for MDV]\label{example:mdv_cfn}
We can now give the complete conditional fare network $\farenetwork$ for the fare structure of MDV.
We have already introduced the ticket graph (Example \ref{example:ticketgraph}).
As in Example \ref{example:symbols}, we define the monoid $(\monoidset,+,\leq)$ as $(\distmonoid\times \stopmonoid \times \zonemonoid ,+, \leq)$.
The components  of $(\monoidset,+,\leq)$ are summarized in Table \ref{table:monoid}.
\begin{table}[h]
    \centering
    {
    \begin{tabular}{llllll}
        \toprule
        Name          & Represents    & Ground set   & Operator & Partial Order & Neutral Element \\
        \hline
        $\distmonoid$ & Distance      & $\N$         &  $+$     & $\leq$        & $0$ \\
        $\stopmonoid$ & Stops         & $\N$         &  $+$     & $\leq$        & $0$ \\
        $\zonemonoid$ & Fare Zones    & $2^Z$        &  $\cup$  & $\subseteq$   & $\emptyset$ \\
        \bottomrule
    \end{tabular}
    }
\caption{MDV Fare Monoid}\label{table:monoid}
\end{table}

The set of fare events is $\fareevents = \{\eventcity,\eventtrans,\eventhalle,\eventleipzig,\nullevent\}$. The meaning of these events is summarized in Table \ref{table:events}.
Vertices $v\in V$ in the routing graph can now be annotated with an initial fare state from $\statespace=\tickets\times\monoidset$.
For example, for station $\mathcal{G}$ in Figure \ref{fig:mdv_plan} we have
$\startstate(v) = (\mdvdis,(0,0,\{156\}))$, i.e., we start with the short-distance ticket, and the weight is composed of  {$0$ m} of distance, $0$ visited stops and the fare zone $156$.
Arcs $\rarc \in \rarcs$ are now annotated with weights from $\monoidset$ and events from $\fareevents$.
For example, if $\rarc_1=(\mathcal{F},\mathcal{G})$ had a length of 231 m, it would be annotated 
with $\wfs(\rarc_1) = (231,1,\{233\})$ and $\ef(\rarc_1) = \eventcity$.
The arc $\rarc_2 = (\mathcal{I},\mathcal{H})$ represents a footpath and is annotated with $\wfa(\rarc_2)=(0,0,\emptyset)$
and $\ef(\rarc_2) = \eventtrans$.

\begin{table}[ht]
    \centering
    {
    \begin{tabular}{ll}
        \toprule
        Fare event  & Represents  \\
        \hline
        $\eventcity$ &  Leaving City with City Ticket $\mdvt1$ or $\mdvt2$\\
        $\eventtrans$   &  Transfer \\
        $\eventhalle$   &  Head of Arc is in Halle \\
        $\eventleipzig$   &  Head of Arc is in Leipzig \\
        $\nullevent$ & Nothing \\
        \bottomrule
    \end{tabular}
    }
    \caption{MDV Fare Events}\label{table:events}
\end{table}

Hence, to construct a  conditional fare network for  MDV, we now only need to give the transition functions for $\ticketgraph$.
Let $\fareevent \in \fareevents$ and $\monoidel=(\monoidel_{dist},\monoidel_{stop},\monoidel_{zone})\in \monoidset$. Then, the ticket transition function $\transfunc$ of $\ticketgraph$ is defined by

\footnotesize
 \begin{alignat*}{4}
&\transfunc(\mdvmax, \monoidel, s) &&= \,\,\,\, \mdvmax\\
&\transfunc(Z_i, \monoidel, s) &&= \begin{cases}
                 \mdvz{i+1} & |h_{zone}| = i+1\\
                 \mdvz{i}     & \text{otherwise}  
             \end{cases}&& 
     \transfunc(\mdvdis, \monoidel, s) &&= \begin{cases}
                \mathrlap{\mdvz1}\hphantom{\mdvz{i+1}}  & |\monoidel_{zone}| = 1 \wedge (\monoidel_{dist} > 4 \vee s = \eventtrans )\\
                                            \mdvz{2} & |\monoidel_{zone}| = 2 \wedge (\monoidel_{dist} > 4 \vee s = \eventtrans )\\
                                            \mdvz{3} & |\monoidel_{zone}| = 3 \wedge (\monoidel_{dist} > 4 \vee s = \eventtrans )\\
                 \mdvdis     & \text{otherwise} 
             \end{cases}\\
&    \transfunc(\mdvl,\monoidel,s) &&= \begin{cases}
                \mdvl  & s = \eventleipzig \vee s = \eventtrans\\
                \mathrlap{\mdvz2}\hphantom{\mdvz{i+1}} 
                & \text{otherwise} 
     \end{cases} \quad\quad\quad\quad &&
         \transfunc(\mdvkl,\monoidel,s) &&= \begin{cases}
                \mathrlap{\mdvz2}\hphantom{\mdvz{i+1}} & s \neq \eventleipzig  \wedge  \monoidel_{stop} > 4\\
                \mdvl & s = \eventtrans \vee (s = \eventleipzig \wedge \monoidel_{stop} > 4)  \\
                \mdvkl & \text{otherwise} 
     \end{cases} \\
&     \transfunc(\mdvh,\monoidel,s) &&= \begin{cases}
                H & s = \eventhalle  \vee s = \eventtrans \\
                \mathrlap{\mdvz2}\hphantom{\mdvz{i+1}} & \text{otherwise} 
     \end{cases}&&
             \transfunc(\mdvkh,\monoidel,s) &&= \begin{cases}
                \mathrlap{\mdvz2}\hphantom{\mdvz{i+1}} & s \neq \eventhalle  \wedge \monoidel_{stop} > 4\\
                \mdvh & s = \eventtrans  \vee (s = \eventhalle \wedge \monoidel_{stop} > 4)\\
                \mdvkh & \text{otherwise} 
    \end{cases}\\
&    \transfunc( \mdvt1, \monoidel, s) &&= \begin{cases}
                 \mathrlap{\mdvz1}\hphantom{\mdvz{i+1}} & s = \eventcity  \wedge \monoidel_{dist} > 4\\
                 \mdvdis &  s = \eventcity  \wedge \monoidel_{dist} \leq 4 \\ 
                 \mdvt1     & \text{otherwise} 
             \end{cases}&&
    \transfunc(\mdvt2, \monoidel, s) &&= \begin{cases}
                 \mathrlap{\mdvz1}\hphantom{\mdvz{i+1}} & s = \eventcity \wedge \monoidel_{dist} > 4\\
                 \mdvdis & s = \eventcity  \wedge \monoidel_{dist} \leq 4\\ 
                 \mdvt2 &    \text{otherwise.} 
             \end{cases} \\
\end{alignat*}
\normalsize
For example, the conditions for $\Gamma(D_L, h, s)$ mean that we need to transition to the two zones ticket $\mdvz2$ whenever we leave Leipzig and travel for more than four stops, and that we transition to the standard Leipzig ticket $\mdvl$ whenever we transfer or surpass the four stops limit within Leipzig. In all other cases, we can stick with the discounted Leipzig ticket $\mdvkl$. 
Note that we did not yet cover MDV's overlap areas.
We discuss in Section \ref{par:neutral} why it is best to address these in a preprocessing step.
\end{example}

We can now finally formalize the price-optimal earliest arrival problem.

\begin{definition}[Price-Optimal Earliest Arrival Problem (\POEAP)]\label{def:poeap}
Let a public transportation network be given as a directed graph $G=(V,\rarcs)$
together with a conditional fare network $\sixtuple$ and a time-dependent FIFO travel time function $c(\rarc): I \rightarrow I\ \forall \rarc \in \rarcs$. 
Then, the price-optimal earliest arrival problem (POEAP) asks to find a Pareto-set (w.r.t. price and arrival time) of $s,t$-paths $\pprice \subseteq \stpaths $ in $G$, i.e.,
\begin{align}
\Forall p^* \in \pprice  \Nexists p \in \stpaths :  \price(p) \leq \price(p^*) \land c(p) \leq c(p^*) \land ( \price(p) < \price(p^*) \lor c(p) < c(p^*))  \\
\Forall p\in\stpaths\Exists p^* \in\pprice: \price(p^*)\leq \price(p) \land c(p^*)\leq c(p).
\end{align}

\end{definition}

\subsection{Some Hints on Modeling with CFNs}
In the following, we elaborate on some common features of fare structures and how they can be modeled using conditional fare networks.

\paragraph*{Transfer Penalties, Footpaths, and Surcharges}
When footpaths have no influence on the ticket, they can be modeled as arcs with a weight of $\neutr\in\monoidset$ and an event $\nullevent$ 
that cannot activate a ticket transition.
This way, a footpath does not change the current fare state.
The transition from a footpath to a public transportation vehicle requires some care.
Assume we walk from stop $v_0$ to $v_1$ along arc $a_0=(v_0,v_1)$ to take a vehicle along $a_1=(v_1,v_2)$ to reach $v_2$.
Some fare structures use the number of stops a path touches to calculate prices.
Here, this number would be two.
Counting a stop when relaxing $a_0$ is a mistake if the optimal path would be to continue on foot.
Counting both $v_1$ and $v_2$ when relaxing $a_1$ is also wrong, since this would overcount the number of stops for every journey that reaches $v_1$ via a vehicle.
Hence, the graph model needs to be extended by splitting up stops into vertices for every route and a vertex that is connected to footpaths.
These vertices are then connected via transfer arcs and boarding arcs. 
Placing weights and events different from $\neutr$ and $\nullevent$
on transfer arc allows us to make the applicable ticket dependent on the number of transfers, while events on arcs representing boarding can be used to model surcharges for the boarded route.
For more details on how to build these expanded graphs, we refer to \citet{Disser2008MulticriteriaSP}.

\paragraph*{Overlap areas}\label{par:neutral}
Some fare structures that are based on fare zones contain overlap areas.
Stations in an overlap area can be counted as part of either of its neighboring zones, whichever is cheapest for the costumer. 
This is meant to mitigate sharp price increases for short journeys at fare zone borders.
MDV uses them as well as several other German railway companies (e.g., Verkehrsverbund Bremen/Niedersachsen GmbH \citep{VBB2023Fares}).
	
At a first glance, one might be tempted to represent overlap areas as tickets in the ticket graph. 
A label propagated along a path starting in an overlap area then keeps this ticket until a regular fare zone is picked up along the path and transitions in the zone ticket for this fare zone.
This approach, however, becomes cumbersome when several overlap areas border each other. In this case, a ticket for each combination of overlap areas needs to be introduced.

Alternatively, overlap ares can be incorporated by label duplication:
Assume an overlap area neighbors $n$ fare zones.
We associate each arc $a$ whose $head(a)$ represents a stop in the overlap area with $n$ different weights, one for each fare zone it could possibly be part of.
When settling the vertex in a shortest path search, the current fare state is updated once for each weight, thereby creating $n$ new labels. 
This, however, leads to an increased need for dynamic memory allocation, which should be avoided. 

Hence, we propose simply route duplication as the most convenient model for overlap areas.
Whenever a route of the timetable contains a stop $v$ in an overlap area neighboring $n$ fare zones, we simply introduce $n$ routes each with a single fare zone at $v$.
In the routing graph, this corresponds to introducing parallel arcs with each storing a different fare zone in its weight.
To avoid creating unnecessary duplicates, this is done block-wise, i.e., only for each consecutive sequence of stops along a route that are in the same overlap area.
Hence, overlap areas are taken care of in a preprocessing step and are not represented in the conditional fare network.

\section{CFNs in Routing Algorithms}\label{sec:algo}

Label-setting MOSP algorithms rely on dynamic programming and the subpath optimality condition \citep{Berger_subpath-optimalityof}.
That is, every subpath of an optimal $s,t$-path is in itself an optimal path. 
For \POEAP{}, when 
comparing paths in $G$ naively by the price function $\price$, the subpath optimality condition is usually violated. Consider taking a local detour to  avoid a fare zone: Later on, travelers may be forced to cross the zone due to the infrastructure, turning the locally dominant detour into a suboptimal choice. On the other hand, a locally dominated subpath might still lead to an optimal $s,t$-path. This type of problem persists in CFNs: the transition between tickets depends on the weights and events already collected, but also on the structure of the reachable ticket graph.
Example \ref{example:dom} highlights that problems can already arise even in simple cases.
	
\begin{example}[Label Dominance in Figure \ref{ref:comp}]\label{example:dom}
		
\begin{figure}[ht]
	\centering
 \begin{minipage}{\linewidth}
 \begin{minipage}{\linewidth}
	\centering
\subfloat[Routing Graph]{
	\begin{tikzpicture}[scale=0.7]
	\tikzset{nodesti/.style={draw,circle,minimum size=0.8cm}}
	\node[nodesti] (v1) at (-2,0){\small$v_1$};	
	\node[nodesti] (v2) at (2,2){\small$v_2$};	
	\node[nodesti] (v3) at (2,-2){\small$v_3$};	
	\node[nodesti] (v4) at (6,0){\small$v_4$};	
	\node[nodesti] (v5) at (12,0){\small$v_5$};
        \draw[->] (v1) -- (v2) node[midway,align=left,above,xshift=-1cm,yshift=0.1cm] {\small$\wfa(v_1,v_2)  = 0$\\$\ef(v_1,v_2) = s_0$};
        \draw[->] (v1) -- (v3) node[midway,align=left,below,xshift=-1cm,yshift=-0.1cm] {\small$\wfa(v_1,v_3) = 0$\\$\ef(v_1,v_3) = s_0$};
	\draw[->] (v2) -- (v4) node[midway,align=left,above,xshift=1cm,yshift=0.1cm] {\small$\wfa(v_2,v_4)   = 1$\\$\ef(v_2,v_4) = s_1$};
	\draw[->] (v3) -- (v4) node[midway,align=left,below,xshift=1cm,yshift=-0.1cm] {\small$\wfa(v_3,v_4)  = 2$\\$\ef(v_3,v_4) = s_2$};	
	\draw[->] (v4) -- (v5) node[midway,align=left,below,yshift=-0.3cm] {\small $\wfa(v_4,v_5)            = 2$\\$\ef(v_4,v_5) = s_3$};
	\end{tikzpicture}
	}
	\end{minipage} 
 \begin{minipage}{\linewidth}	
	\centering	
 \begin{minipage}{.45\linewidth}
	\centering
\subfloat[Ticket Graph]{
	\begin{tikzpicture}[scale=0.7]
	\tikzset{nodesti/.style={draw,circle,minimum size=0.8cm}}
	\node[nodesti] (A) at (0,-0.5){\small$A$};	
	\node[nodesti] (B) at (3,1){\small$B$};
	\node[nodesti] (C) at (6,1){\small$C$};
	\node[nodesti] (D) at (3,-2){\small$D$};
	\node[nodesti] (E) at (6,-2){\small$E$};
	\draw[->] (A) -- (B) node[midway,above,xshift=-0.3cm] {\small$ \mathbbm{1}_{ \{ s =s_1 \} }  $};
	\draw[->] (B) -- (C) node[midway,above] {\small$  \mathbbm{1}_{ \{ s =s_3 \} }  $};
	\draw[->] (A) -- (D) node[midway,below,xshift=-0.3cm] {\small$\mathbbm{1}_{ \{ s =s_2 \} }  $};							
	\draw[->] (D) -- (E) node[midway,below] {\small$\mathbbm{1}_{ \{ s =s_3 \} }  $};							
	\end{tikzpicture}	
}
\end{minipage}
\begin{minipage}{.54\linewidth}
\centering
\subfloat[Ticket Graph]{
\makebox[.51\linewidth]{ 
	\begin{tikzpicture}[scale=0.7]
	\tikzset{nodesti/.style={draw,circle,minimum size=0.8cm}}
	\node[nodesti] (A) at (0,-0.5){\small$A$};	
	\node[]  at (3,1){};
	\node[nodesti] (B) at (4,1){\small$B$};
	\node[] at (3,-2){};
	\node[nodesti] (C) at (4,-2){\small$C$};
	\draw[->] (A) -- (B) node[midway,above,xshift=-0.5cm] {\small $\mathbbm{1}_{ \{ s = s_3 \wedge  h \leq 3 \}} $};
	\draw[->] (A) -- (C) node[midway,below,xshift=-0.5cm] {\small $ \mathbbm{1}_{ \{ s = s_3 \wedge  h > 3 \}}$};						
	\end{tikzpicture}}}
\end{minipage}
\end{minipage}\end{minipage}
\medskip 
\caption{Example of a routing graph \textbf{(a)} with two possible conditional fare networks \textbf{(b)} and \textbf{(c)}.
For both networks, the underlying partially ordered monoid is $(\mathbb{R},+,\leq)$, 
the fare events are  $\fareevents = \{s_0,s_1,s_2,s_3\}$ and the initial fare state for all vertices $v_i$ with $i= 1,\dots,5$ is $\startstate(v_i) = (A,0)$.  
We set prices for the tickets as $\price(A) = 0$, $\price(B) = 2$, $\price(C) = 3$, $\price(D) = 1$ and $\price(E) =5$.
The value of the transition function $\transfunc$ for a given weight $\monoidel$ and event $\fareevent$ is given via indicator functions on the fare arcs.
Using the ticket graph \textbf{(b)}, the upper $v_1,v_5$-path yields ticket $C$, while the lower path yields ticket $E$.
Using ticket graph \textbf{(c)},
the upper path yields ticket $B$, the lower path yields ticket $C$.}\label{ref:comp}
\end{figure} 

Consider the routing graph \textbf{(a)} together with the  conditional fare network \textbf{(b)}.
Examining the paths $p_1 = (v_1,v_2,v_4)$ and $p_2=(v_1,v_3,v_4)$, we find their respective  fare states are $f(p_1) = (B,1)$ and $f(p_2) = (D,2)$.
Extending them by $v_5$ to $p_{1}'$ and $p_{2}'$ yields $f(p'_1) = (C,3)$ and $f(p'_2) = (E,4)$.
Comparing fare states by price would indicate that $p_1$ could be pruned at $v_4$ since $\price(B) > \price(D)$.
This is a suboptimal choice as $p'_1$ dominates $p'_2$ since $\price(C) < \price(E)$.
Hence, price cannot be used as dominance criterion for fare states.
A natural alternative would be to use the partial order defined by paths in the ticket graph, instead.
A ticket $\ticket_1$ then dominates a ticket $\ticket_2$ if there is a $\ticket_1,\ticket_2$-path.
This would render the tickets $B$ and $D$ and the tickets $C$ and $E$ mutually incomparable.
The idea, however, comes with problems of its own.
To see this, consider the conditional fare network \textbf{(c)}.
At $v_4$, we have $\state(p_1) = (A,1)$ and $\state(p_2) = (A,2)$ and hence both paths are equivalent and 
it would be sensible to keep only one of them based on the relation between $\wfs(\state(p_1))$ and $\wfs(\state(p_2))$.
By relaxing $(v_4,v_5)$,  we obtain $\state(p'_1) = (B,3)$ and $\state(p'_2) = (C,4)$, which are incomparable, i.e., the fare states of $p'_1$ and $p'_2$ diverged from comparable to incomparable.
Consequently, any dominance rule pruning either $p_1$ or $p_2$ would be defective.
\end{example}

To mitigate these and similar problems, we might assume a general incomparability of fare states. This comes down to enumerating all $s,t$-paths and simply sorting them by price.
However, in a sensibly designed fare structure, it is usually clear which ticket is better, and taking a cheaper subpath should usually not turn out more expensive overall.
In the remainder of this section, we propose a more tailored approach.
It bases domination rules on path relationships but adds exceptions to cover cases in which it is not safe to do so.
    
\subsection{Dominance for Fare States}

We want to define a partial order for fare states that restores subpath optimality while not relaxing dominance too generously. 

To do so, we partition the ticket set $\tickets$  into three disjoint \textit{comparability groups}: $C_F$ (full comparability), $C_P$ (partial comparability), $C_N$ (no comparability).
Based on the partition $C =(C_F,C_P,C_N)$, we define the partial order.
\begin{definition}[Comparability of Fare States]\label{def:compstates}
    Let $\state_1 = (\ticket_1,\monoidel_1)$, $f_2=(\ticket_2,\monoidel_2)$ be fare states. We say $\state_1 \leqc \state_2$ if and only if
    $\ticket_1 \notin C_N$, $ \monoidel_1 \leq \monoidel_2 $ and 
    \begin{alignat}{5}
    &\ticket_1 = \ticket_2 &&\text{if } \ticket_1\in C_P\\
    & \Exists \ticket_1,\ticket_2\text{-path in $\ticketgraph$}\quad &&  \text{if } \ticket_1 \in C_F.
    \end{alignat}
    If $\state_1\leqc \state_2$ and either $\monoidel_1 < \monoidel_2$ or $\ticket_1 \neq \ticket_2$, 
    we say that $\state_1$ is \emph{strictly less} than $\state_2$, i.e., $\state_1 \lc \state_2$.
\end{definition}

We denote by $\pstate$ the set of all Pareto-optimal paths with respect to $\leqc$, i.e.,
\begin{equation}\label{eq:mono}
p^*  \in \pstate \Rightarrow \Nexists s,t\text{-path } p : \state(p)\lc \state(p^*).
\end{equation}

We call paths in  $\pstate$ \textit{state-optimal}. They form a superset of the set of price-optimal paths.
MOSP algorithms on graphs with weights from partially ordered monoids rely on the monoid operation being translation-invariant with respect to the partial order.
Similarly, for CFN's, we need the update function to be monotone along all arcs $\rarc \in \rarcs$, i.e.,
\begin{equation}\label{eq:updatemono}
\Forall \state_1,\state_2\in \statespace: \state_1 \leqc \state_2 \Longrightarrow  \Forall \rarc \in \rarcs : \update(\state_1,\rarc) \leqc \update(\state_2,\rarc). 
\end{equation} 
This condition is enough to ensure that a weaker form of subpath optimality holds.
	
\begin{proposition}[Weak Subpath Optimality]\label{proposition:subpathpotimality}
    Let $G=(V,A)$ be a routing network and \mbox{$\farenetwork = \sixtuple$} be its conditional fare network. 
    Let $p^* \in \pstate $ be a state-optimal \mbox{$s$,$t$-path}  in $G$ for some $s,t\in V$.
    Then, there is a path $p'= (s=v_0,v_1, \dots,v_{n-1},v_n = t) \in \pstate$ with $\state(p^*)=f(p')$, such that every subpath $p''=(v_0,\dots,v_l)$, $l< n$, of $p'$ is a state-optimal $v_0,v_l$-path.
\end{proposition}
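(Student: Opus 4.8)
The plan is to prove this by an exchange argument: starting from the given state-optimal path $p^*$, we construct $p'$ by successively replacing initial segments with state-optimal ones, and we show that this can be done without changing the final fare state. First I would set up the induction carefully. Suppose for contradiction (or directly, building up a longest good prefix) that some proper prefix $p'' = (v_0, \dots, v_l)$ of $p^*$ is \emph{not} state-optimal. Then by definition of $\pstate$ there is a $v_0, v_l$-path $q$ with $\state(q) \lc \state(p'')$. The idea is to splice $q$ together with the suffix $(v_l, v_{l+1}, \dots, v_n)$ of $p^*$ to obtain a new $s,t$-path $\tilde p$. The crucial claim is that $\state(\tilde p) \leqc \state(p^*)$, so $\tilde p$ is also state-optimal and, by maximality/minimality of the configuration, actually satisfies $\state(\tilde p) = \state(p^*)$, while having a strictly "better" prefix.

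The heart of the argument is propagating the inequality $\state(q) \leqc \state(p'')$ forward along the shared suffix. This is exactly what condition~\eqref{eq:updatemono} gives: since $\update$ is monotone along every single arc, an induction on the length of the suffix shows $\update$-iterated is monotone along any path, hence $\state(\tilde p) \leqc \state(p^*)$. Here I would state and prove a small auxiliary lemma: if $\state_1 \leqc \state_2$ and $p = (w_0, \dots, w_k)$ is any path, then applying the sequence of arc-updates of $p$ to $\state_1$ yields something $\leqc$ the result of applying them to $\state_2$; this is immediate from~\eqref{eq:updatemono} by induction on $k$. Combined with $\state(q) \leqc \state(p'')$ at the splice point $v_l$, we get $\state(\tilde p) \leqc \state(p^*)$.

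Now, since $p^* \in \pstate$, no $s,t$-path can be strictly less than $p^*$, so $\state(\tilde p) \leqc \state(p^*)$ forces $\state(\tilde p) = \state(p^*)$ — there is no strict inequality. Thus $\tilde p$ is itself state-optimal with the same fare state as $p^*$, but its prefix $q$ of length $l$ is now state-optimal (as $\state(q) \lc \state(p'')$ combined with optimality constraints; more carefully, we should choose $q$ to itself be state-optimal, which we may since $\pstate$ is closed under taking such dominators — if $q$ were dominated we could replace it by a state-optimal dominator, again using transitivity of $\leqc$). To turn this into a clean proof of the proposition, I would run the argument inductively on $l = 1, 2, \dots, n-1$: having ensured the prefix of length $l$ is state-optimal without changing the final state, repeat the replacement for length $l+1$, and so on. After at most $n-1$ steps every proper prefix is state-optimal, which is the claimed $p'$.

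The main obstacle I anticipate is bookkeeping around \emph{simultaneously} fixing all prefixes: replacing the prefix of length $l+1$ could in principle disturb the state-optimality of the prefix of length $l$ that we fixed earlier. To handle this cleanly, I would process prefixes from shortest to longest and, at step $l+1$, choose the dominating $v_0,v_{l+1}$-path $q$ whose own prefix of length $\leq l$ coincides with the already-fixed path — i.e., extend the fixing inductively rather than restart it — or alternatively phrase the whole thing as: "let $p'$ be an $s,t$-path with $\state(p') = \state(p^*)$ that is lexicographically minimal (by length of prefixes, then by $\leqc$) among all such paths," and then derive a contradiction from any non-state-optimal prefix using the splice above. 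A secondary subtlety worth a sentence is the need to invoke transitivity of $\leqc$ (to compose "$q$ dominated by a state-optimal path" with "that path dominated relative to $p''$"), which follows from transitivity of $\leq$ on $\monoidset$, reachability in the DAG $\ticketgraph$ being transitive, and equality in the $C_P$ case — all routine but worth noting.
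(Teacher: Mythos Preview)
Your proposal is correct and takes essentially the same approach as the paper: splice a dominating prefix $q$ onto the original suffix, propagate the inequality $\state(q)\leqc\state(p'')$ along the shared suffix arc by arc via \eqref{eq:updatemono}, use state-optimality of $p^*$ to force $\state(\tilde p)=\state(p^*)$, and iterate. The paper is terser about termination---it simply appeals to finiteness of (simple) paths in $G$ to guarantee the process ends---whereas you spell out the bookkeeping concern and propose a shortest-prefix-first induction or a lexicographic-minimality argument; this is more explicit but not materially different.
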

\begin{proof}
    Let $p^*=(s=v_0,v_1,\dots,v_{n-1},v_n=t)\in \pstate$ be a state-optimal $s,t$-path with fare states $(\state_0,\dots,\state_n)$.
    Assume there is a $s,t$-path \mbox{$\tilde{p} = (s = u_0,u_1,\dots,u_{l-1}=v_{n-1},u_l= t)$} with fare states $(\altstate_0,\altstate_1,\dots, \altstate_l)$ and $\altstate_0=\state_0$. By \eqref{eq:updatemono}, we can choose $\tilde{p}$ to be a simple path.
    Let $k$ be the largest integer such that $v_{n-k} = u_{l-k}$, i.e., the paths $(v_{n-k},\dots,v_{n})$ and $(u_{l-k}, \dots u_{l})$ are equal.
    Now assume $\tilde{\state}_{l-k-1} \lc \state_{n-k-1}$.
    By definition, $\state_{n-k} = \update(\state_{n-k-1},(v_{n-k-1},v_{n-k}))$ and 
    $\altstate_{l-k} = \update(\altstate_{l-k-1},(u_{l-k-1},u_{l-k}))$.
    We apply \eqref{eq:updatemono} to obtain $\tilde{f}_{l-k} \leqc f_{n-k}$.
    By repeating the process for $i\in\{k-1,\dots,0\}$, we find 
    $\altstate_{l} \leqc \state_{n}$. Since $p^*$ was state-optimal, it follows that $\altstate_{l} = \state_{n}$,
    and consequently, $p$ is also state-optimal.
    Since the number of paths in $G$ is finite, we can repeat this procedure to find the path $p'$.
\end{proof}
Proposition \ref{proposition:subpathpotimality} does not imply that every subpath of a state-optimal path is state-optimal.
We can, however, discard all state-optimal paths without this property since a path with an equal fare state still remains in $\pstate$.
Hence, label-setting MOSP algorithms  can still be applied.
	
\subsection{The Comparability Partition}
    
In choosing $\fullcomp$, $\partcomp$ and $\nocomp$, there is some degree of freedom.
We want $\fullcomp$ to be as big and $\nocomp$ as small as possible while still fulfilling \eqref{eq:updatemono}.
It is clear that the best choice does not only depend
on the ticket graph $\ticketgraph$ and the transition function $\transfunc$, but also on 
$G$ and the arc weights and events.
Such an approach, however, mostly likely requires extensive computations on $G$.
We propose a solution that depends only on $\ticketgraph$ and $\transfunc$ and needs no recomputation when changes in the  routing network occur.

First, we introduce some notation.
If there is a directed path in $\ticketgraph$ between $\ticket_1,\ticket_2\in \tickets$, we write $\ticket_1 \patheq \ticket_2$. This includes the case  $\ticket_1=\ticket_2$.
The \emph{reach} $\reach(\ticket)$
of a vertex $\ticket \in \tickets$ is the subgraph induced by all vertices reachable from $\ticket$, i.e.,
$
\reach(\ticket) := \ticketgraph[\{ k \in \tickets:  \ticket \patheq k\}].
$
\begin{definition}[No-overtaking Property]
    Let $\ticket \in \tickets$ be a ticket. 
    We say its reach $\reach(\ticket)$ has the \emph{no-overtaking property} if for all tickets $k,l \in \reach(\ticket)$ with $k\patheq l$ and $(\monoidel,\fareevent) \in \attributespace$ it
    holds that
    \begin{equation}
    \Forall \bar{\monoidel} \in \monoidset:  \monoidel \leq \bar{\monoidel} \Longrightarrow \transfunc(k, \monoidel,\fareevent) \patheq \transfunc(l,\bar{\monoidel},\fareevent). 
    \label{eq:noover}
    \end{equation}
\end{definition}

The no-overtaking property bears some resemblance to the FIFO (first-in, first-out) property:
A worse fare state, i.e., a worse weight or ticket, cannot give rise to a better fare state when relaxing the same arc in the routing graph.
Note that the no-overtaking property has to be fulfilled not only for the neighborhood of a ticket $\ticket$ but for the reach $\reach(\ticket)$.
Subgraphs with the no-overtaking property allow for the strictest domination rules. We use them as comparability group $\fullcomp$.
	
\begin{definition}[Comparability Partition]\label{def:compPar}
    Let $G=(V,\rarcs)$ be a routing network with a CFN ${\farenetwork = \sixtuple}$. 
    We define
    \begin{align}
    \fullcomp &:=  \{ \ticket \in \tickets  : \reach(\ticket) \text{ traceable and has the no-overtaking property}  \}\\
    \partcomp &:=  \{ \ticket \in \tickets \backslash \fullcomp: \Forall k \in \reach(\ticket)
    \Forall  \fareevent \in \fareevents \Forall \monoidel_1,\monoidel_2 \in \monoidset  :\transfunc(k,\monoidel_1,\fareevent) = \transfunc(k,\monoidel_2,\fareevent)  \}\\
    \nocomp &:=    \{ \ticket \in \tickets \backslash(\fullcomp\cup \partcomp) \}.
    \end{align}
\end{definition}

It is not enough to fulfill \eqref{eq:noover} for $\ticket\in\tickets$ to be in the set $\fullcomp$.
Its reach $\reach(\ticket)$ has also to be \emph{traceable}, i.e., contain a Hamiltonian path.
This condition is needed to avoid the divergence seen in Example \ref{example:dom}. 
If a ticket has non-traceable reach or does not have the no-overtaking property, it is placed in $\partcomp$. 
For tickets $\ticket \in \partcomp$, the transition functions of tickets $k\in\reach(\ticket)$ must be independent of $\pmonoid$.
This, again, is necessary to ensure that comparable fare states do not diverge in an incomparable state after an update, i.e., all tickets that can be reached from a ticket in $\fullcomp$ themselves need to be in $\fullcomp$.
All remaining tickets are added to $\nocomp$.
Fare states containing tickets from $\nocomp$ can never be dominated. 

\begin{example}[Dominance for MDV Fares]
    In the graph in Figure \ref{fig:mdv_tgraph}, all nodes have traceable reach, and it is easy to verify that the no-overtaking property does indeed hold for all tickets.
    Hence, we can set the comparability partition to $\fullcomp = \tickets$, $\partcomp = \nocomp = \emptyset$.
\end{example}

\begin{example}[Dominance for Example \ref{example:dom}]
    For Ticket Graph \textbf{b)}, we have 
    \mbox{$\fullcomp = \{B,C,D,E\}$}, $\partcomp=\{A\}$ and $\nocomp=\emptyset$.
    For Ticket Graph \textbf{c)}, we have that
    $\fullcomp=\{B,C\}$, $\partcomp = \emptyset$ and $\nocomp=\{A\}$.
\end{example}

\begin{proposition}[Monotonicity  of the Comparability Partition] \label{proposition:mono}
    The partial order $\leqc$ defined by  Definitions \ref{def:compstates} and  \ref{def:compPar} fulfills the monotonicity condition \eqref{eq:updatemono}.
\end{proposition}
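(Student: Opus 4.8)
The goal is to verify condition \eqref{eq:updatemono}: if $\state_1 \leqc \state_2$ then $\update(\state_1,\rarc) \leqc \update(\state_2,\rarc)$ for every arc $\rarc$. Write $\state_1 = (\ticket_1,\monoidel_1)$, $\state_2 = (\ticket_2,\monoidel_2)$, and let $\altstate_i = \update(\state_i,\rarc)$ with weight $\monoidel_i' = \monoidel_i + \wfa(\rarc)$ and ticket $\ticket_i' = \transfunc(\ticket_i,\monoidel_i',\ef(\rarc))$. The plan is to first dispatch the weight component, then split into the three cases according to which comparability group $\ticket_1$ lies in.

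First I would handle the weight: since $\monoidel_1 \leq \monoidel_2$ and $\leq$ is translation-invariant on $\pmonoid$, we get $\monoidel_1' = \monoidel_1 + \wfa(\rarc) \leq \monoidel_2 + \wfa(\rarc) = \monoidel_2'$ immediately; this is the easy half and is common to all cases. The case $\ticket_1 \in C_N$ is vacuous — then $\state_1 \leqc \state_2$ never holds — so there is nothing to prove. The case $\ticket_1 \in C_P$: here $\state_1 \leqc \state_2$ forces $\ticket_1 = \ticket_2$, and by Definition \ref{def:compPar} every ticket in $\reach(\ticket_1)$ has a transition function independent of the monoid component, so $\ticket_1' = \transfunc(\ticket_1,\monoidel_1',\ef(\rarc)) = \transfunc(\ticket_2,\monoidel_2',\ef(\rarc)) = \ticket_2'$ regardless of the (possibly different) weights. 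But then I must check that $\ticket_1'$ lands in $C_P \cup C_F$, not in $C_N$, so that $\altstate_1 \leqc \altstate_2$ actually makes sense: $\ticket_1'$ is in the closed out-neighborhood of $\ticket_1$, hence in $\reach(\ticket_1)$, and since the monoid-independence of transition functions is inherited by the whole reach, $\ticket_1'$ is again in $C_F \cup C_P$ — here I would spell out why $\reach(\ticket_1') \subseteq \reach(\ticket_1)$ keeps the defining property, and also note that if $\ticket_1 \in C_P$ happened to have traceable reach with no-overtaking then $\ticket_1'$ could even be in $C_F$, which is still fine since $\ticket_1' = \ticket_2'$ gives a trivial path. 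The nontrivial case is $\ticket_1 \in C_F$. Then there is a $\ticket_1$–$\ticket_2$-path in $\ticketgraph$, so $\ticket_2 \in \reach(\ticket_1)$, and both $\ticket_1, \ticket_2$ lie in $\reach(\ticket_1)$, which is traceable and has the no-overtaking property. Applying \eqref{eq:noover} with $k = \ticket_1$, $l = \ticket_2$, $\monoidel = \monoidel_1'$, $\bar\monoidel = \monoidel_2'$, $\fareevent = \ef(\rarc)$ (legitimate since $\monoidel_1' \leq \monoidel_2'$ and $\ticket_1 \patheq \ticket_2$) yields $\ticket_1' = \transfunc(\ticket_1,\monoidel_1',\ef(\rarc)) \patheq \transfunc(\ticket_2,\monoidel_2',\ef(\rarc)) = \ticket_2'$, i.e. a $\ticket_1'$–$\ticket_2'$-path exists; together with $\monoidel_1' \leq \monoidel_2'$ and the fact that $\ticket_1' \in \reach(\ticket_1) \subseteq C_F$ (again using that $C_F$ is downward-closed under reachability — every ticket reachable from a $C_F$ ticket is itself $C_F$, which follows because traceability and no-overtaking of $\reach(\ticket_1)$ restrict to $\reach(\ticket_1')$), this is exactly $\altstate_1 \leqc \altstate_2$.

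The main obstacle I anticipate is not any single inequality but the bookkeeping that the target ticket $\ticket_1'$ stays in the same comparability group as $\ticket_1$ (or a stricter one), so that the relation $\leqc$ is even applicable to $(\altstate_1,\altstate_2)$ — the definition of $\leqc$ demands $\ticket_1' \notin C_N$. This requires arguing that $\reach(\cdot)$ is monotone under taking out-neighbors and that both defining properties ("traceable with no-overtaking" for $C_F$, "transition functions monoid-independent on the reach" for $C_P$) are inherited by sub-reaches; I would state this as a short preliminary lemma or inline remark before doing the case analysis. A secondary subtlety is confirming that in the $C_P$ case the weights $\monoidel_1'$ and $\monoidel_2'$, though possibly distinct, genuinely do not affect $\transfunc$ on $\ticket_1 = \ticket_2$, which is immediate from the definition of $C_P$ applied to $k = \ticket_1 \in \reach(\ticket_1)$.
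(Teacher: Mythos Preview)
Your proposal is correct and follows essentially the same route as the paper's proof: dispatch the weight component first, then do a case split on whether $\ticket_1 \in C_P$ or $\ticket_1 \in C_F$ (with $C_N$ vacuous), invoking the monoid-independence of $\transfunc$ in the $C_P$ case and the no-overtaking property \eqref{eq:noover} in the $C_F$ case. You are in fact slightly more careful than the paper in two respects: you correctly cite translation-invariance (rather than positivity) for the weight inequality, and you spell out the closure argument that $\ticket_1'$ remains in $C_F \cup C_P$ so that $\leqc$ is still applicable after the update --- the paper asserts ``$\reach(\tf(\state_1)) \subset \fullcomp$'' and ``$\tf(\altstate_1) \in \partcomp \cup \fullcomp$'' without further justification.
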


\begin{proof} 
Let $\rarc \in \rarcs$ and $\state_1,\state_2\in \statespace$ such that $\state_1 \leqc \state_2$. 
    For $i\in\{1,2\}$, we write $\altstate_i := \update(\state_i,\rarc)$ , i.e,
    $\wfs(\altstate_i) = \wfs(\state_i) + \wfa(\rarc)$ and 
    $\tf(\altstate_i) = \transfunc(\tf(\state_i),\wfs(\altstate_i), \ef(\rarc))$.
    By positivity of the monoid $\pmonoid$, $\wfs(\state_1) \leq \wfs(\state_2)$ directly implies $\wfs(\altstate_1) \leq  \wfs(\altstate_2)$.
    It remains to show that $\tf(\altstate_1) \patheq \tf(\altstate_2)$.
    To do so, we need to distinguish the cases  
    $\tf(\state_1)\in \partcomp$ and $\tf(\state_1)\in \fullcomp$.

    First, assume that $\tf(\state_1)\in \partcomp$ and hence
    $\tf(\state_1) = \tf(\state_2)$.
    By the definition of $\partcomp$, we obtain 
    \[\tf(\altstate_1) = \transfunc(\tf(\state_1),\wfs(\altstate_1), \ef(\rarc)) =  \transfunc(\tf(\state_2),\wfs(\altstate_2), \ef(\rarc))= \tf(\altstate_2).\]
    Thus, $\tf(\altstate_1) = \tf(\altstate_2)$.
    Note that the definitions of $\partcomp$ and $\fullcomp$ imply that 
    $\tf(\altstate_1) \in \partcomp \cup \fullcomp $ 
    since 
    $\tf(\altstate_1) \in \reach(\tf(\state_1))$ and hence $\altstate_1 \leqc \altstate_2$.
    Now, assume $\tf(\state_1) \in \fullcomp$. Note that $\reach(\tf(\state_1)) \subset \fullcomp$. 
    This allows us to apply \eqref{eq:noover} to obtain 
    \[ 
         \tf(\altstate_1) = \transfunc(\tf(f_1),\wfs(\altstate_1), \ef(\rarc)) \patheq  \transfunc(\tf(\state_2),\wfs(\altstate_2), \ef(\rarc)) =  \tf(\altstate_2),
    \]
    which concludes the proof.
\end{proof}
	
Propositions  \ref{proposition:subpathpotimality} and \ref{proposition:mono} allow us to apply label-setting MOSP algorithms to POEAP using the comparability partition from Definition \ref{def:compPar}. However, we obtain only the set of state-optimal paths.
It remains to show that this set contains the cheapest path. 
	
\begin{proposition}[Correctness]\label{prop:correctness}
    Let $\price^* := \min_{\stpaths}\price(p)$. Then, there is at least one $s,t$-path $p^*$ with  $\price^* = \price(p^*)$ and
    $p^* \in \pstate$. \end{proposition}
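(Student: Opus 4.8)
The plan is to show that among all minimum-price $s,t$-paths, at least one survives the $\leqc$-domination used to define $\pstate$. The natural strategy is a contradiction argument combined with the weak subpath optimality machinery of Proposition~\ref{proposition:subpathpotimality}. Suppose, for contradiction, that \emph{no} price-optimal $s,t$-path lies in $\pstate$. Pick any price-optimal path $p$ with $\price(p) = \price^*$. Since $p \notin \pstate$, there is an $s,t$-path $q$ with $\state(q) \lc \state(p)$. I would then examine what $\state(q) \leqc \state(p)$ tells us about the prices: by Definition~\ref{def:compstates}, $\state(q) \leqc \state(p)$ forces $\tf(q) \patheq \tf(p)$ when $\tf(q)\in\fullcomp$, and $\tf(q)=\tf(p)$ when $\tf(q)\in\partcomp$ (the case $\tf(q)\in\nocomp$ cannot occur, since states with tickets in $\nocomp$ are never dominated by anyone, hence $\state(q)$ could not be $\leqc\state(p)$ unless equal, contradicting strictness — I would need to check this carefully, but it should follow from the definition). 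In the $\partcomp$ case we get $\price(q) = \price(p)$, so $q$ is also price-optimal, and $\state(q)\lc\state(p)$ then forces $\wfs(q) < \wfs(p)$ with equal ticket; in the $\fullcomp$ case, the directed $\tf(q)$-$\tf(p)$-path in $\ticketgraph$ together with the monotonicity of $\price$ along directed paths in $\ticketgraph$ (part 5 of the CFN definition) gives $\price(q) = \price(\tf(q)) \leq \price(\tf(p)) = \price(p) = \price^*$, and by optimality $\price(q) = \price^*$ as well. So in all cases $q$ is again a minimum-price path, and $\state(q) \lc \state(p)$.

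The key observation is that this sets up a descent that must terminate. I would argue that the relation $\lc$ on fare states is a strict partial order (irreflexive and transitive — transitivity needs a short check using transitivity of $\leq$ on $\monoidset$ and of $\patheq$ in $\ticketgraph$, plus the fact that $\ticketgraph$ is a DAG so $\patheq$ is antisymmetric on its vertices). Since $G$ is finite, the set of $s,t$-paths is finite, hence the set of fare states $\{\state(p') : p' \text{ an } s,t\text{-path}\}$ is finite; a finite strictly-decreasing chain cannot be infinite. Therefore, among all minimum-price $s,t$-paths, choose one, call it $p^*$, whose fare state $\state(p^*)$ is $\lc$-minimal within $\{\state(p') : p' \text{ price-optimal}\}$. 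By the argument of the previous paragraph, if $p^*\notin\pstate$ there would be a price-optimal $q$ with $\state(q)\lc\state(p^*)$, contradicting minimality. Hence $p^* \in \pstate$, which is exactly the claim.

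There is one subtlety I want to flag: $\pstate$ is defined (equation~\eqref{eq:mono}) by the condition that \emph{no} $s,t$-path $p$ has $\state(p)\lc\state(p^*)$ — this is a Pareto-condition over \emph{all} paths, not just price-optimal ones. My $\lc$-minimality argument above only rules out price-optimal dominators, so I need the stronger statement that a $\lc$-minimal element among price-optimal paths is in fact $\lc$-minimal among \emph{all} paths. This is where the price-monotonicity along $\ticketgraph$ does the real work: \textbf{the main obstacle is verifying that any path $q$ with $\state(q)\lc\state(p^*)$ is automatically price-optimal}, so that ``$\lc$-minimal among price-optimal paths'' coincides with ``$\lc$-minimal among all paths.'' Concretely, $\state(q)\lc\state(p^*)\leqc$-comparability gives either $\tf(q)=\tf(p^*)$ (then $\price(q)=\price(p^*)=\price^*$) or a directed $\tf(q)$-$\tf(p^*)$-path in $\ticketgraph$ (then $\price(q)\le\price(\tf(p^*))=\price^*$, and $\price(q)\ge\price^*$ by definition of $\price^*$, so equality); the $\nocomp$ case is vacuous as noted. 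Thus every dominator of $\state(p^*)$ is itself price-optimal, closing the gap. Once this is in place, the finiteness/minimality argument completes the proof cleanly, and in fact shows more: the entire set of price-optimal paths is ``represented'' in $\pstate$ up to equality of fare states.
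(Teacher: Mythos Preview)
Your proof is correct and follows essentially the same approach as the paper: both hinge on the observation that $\state(q)\lc\state(p)$ forces $\tf(q)\patheq\tf(p)$ (or equality in the $\partcomp$ case), so price-monotonicity along $\ticketgraph$ gives $\price(q)\le\price(p)$, and then a finiteness/descent argument yields a price-optimal path in $\pstate$. The paper's version is terser---it jumps directly to the existence of a dominator $p'\in\pstate$ without spelling out the descent---while you make the minimality argument explicit; one small slip worth fixing is your phrasing around $\nocomp$ (states with tickets in $\nocomp$ never \emph{dominate} anything, not ``are never dominated''), though your conclusion there is right.
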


\begin{proof}
Consider a path $p\in\stpaths$ with $\price(p) = \price^*$.
If there is $p' \in \pstate$ with 
\mbox{$\tf(\state(p')) = \tf(\state(p))$}, we are done. If not, all such paths must be dominated w.r.t. to $\lc$ and hence there is a path $p' \in \pstate$ with
$\tf(\state(p')) \rightarrow \tf(\state(p))$.
This implies $\price(\tf(\state(p'))) \leq \price(\tf(\state(p)))$ and hence a path of the same price as $p$ is present in $\pstate$.
\end{proof}

\section{Price-Optimal RAPTOR}\label{sec:psrap}

In this section, we will discuss how to use conditional fare networks to implement a price-optimal version of the multi-criteria RAPTOR  algorithm (abbr. \MCRAP)  \citep{DBLP:journals/transci/DellingPW15}. 
Since \RAP{} implicitly optimizes the number of trips, we obtain an multi-criteria algorithm that optimizes for travel time, number of trips and price.
So far, we have presented our framework in a graph-based context. \RAP, however, does not use a graph model but works directly on the timetable. The adaption for \RAP{} is straightforward.

This section is structured as follows.
A review of the \RAP{} algorithm is provided in Section \ref{sec:rapintro}.  
Then, in Section \ref{sec:fareinrap}, we lay out how \MCRAP{} can be modified to use conditional fare networks for price optimization. 
In Section \ref{sec:restricted}, we briefly recap how improvements in run times can be achieved by calculating a restricted Pareto-set using the recently introduced Bounded-RAPTOR-algorithm 
(\BMRAP) \citep{doi:10.1137/1.9781611975499.5}.
The algorithm excludes all journeys that need significantly more transfers or take significantly more time than the journeys found with an (earliest arrival) \RAP\ query. 
Finally in Section \ref{sec:speedups}, we introduce two speed-up techniques that are tailored to our application. 

Sections \ref{sec:rapintro} and \ref{sec:restricted} give succinct summaries of the  \RAP{} and \BMRAP{} algorithms. 
For a thorough presentation, see the original research in \citet{DBLP:journals/transci/DellingPW15} and \citet{doi:10.1137/1.9781611975499.5}, respectively.
We provide the pseudo-code of \BMRAP{} in \ref{app:mcrap}.

\subsection{Multi-Criteria Search with \MCRAP}\label{sec:rapintro} 
We largely adhere to  the notation  of \citet{DBLP:journals/transci/DellingPW15} albeit with minor modifications to avoid the reuse of variables.

The \RAP{} algorithm does not use a graph model, but works directly on the timetable. 
A timetable is a tuple $\TimeTable$ consisting of a period of operation $\TTPI$, a set of stops $\TTP$, a set of routes $\TTR$, a set of trips $\TTT$ and a set of footpaths $\TTF$. 
A \textit{stop} $p\in \TTP$ is a location where a vehicle can be boarded or exited. 
Each stop $p$  has  a (possibly zero) transfer time $\transtime(p) \in \N$ that is applied whenever a vehicle is boarded at $p$. 
A trip $\raptrip\in\TTT$ is a sequence of stops together with arrival and departure times $\arr(\raptrip,p)$ and $\dep(\raptrip,p)\in\TTPI$.
A \textit{route} is a set of trips, where all trips  have the same sequence of stops and no trip overtakes another one.
We denote the sets of stops and trips of a route $r$ by $\TTP(r)$  and $\TTT(r)$, respectively.
Finally, a footpath $(p_1,p_2,l)\in \TTF$ is a pair of stops $(p_1,p_2)$ combined with a walking time $l$. 

\RAP{} operates in rounds $k = 1,\dots,K$ on $\TT$ with $K\in \N\cup\{\infty\}$ the maximum number of transfers to be considered.
It maintains arrival time labels $\arr(k,p)$ for every $p\in\TTP$ and every round $k\in 1,\dots,K$. Every entry of $\arr$ is initialized to $\infty$. 
Each round begins with a set of marked stops. All routes touching these stops are collected and then processed in an arbitrary order. 
When processing a route $r\in \TTR$, \RAP{} begins with the first marked stop and from there iterates through the stops in order of travel. For each stop $p\in \TTP(r)$, \RAP{} finds the earliest trip $\raptrip$ that can be taken at $p$ after $\arr(k-1,p) +\transtime(p)$.
In the same sweep, the arrival times of $\raptrip$ are used to update $\arr(k,p)$. All stops whose arrival times improved over $\arr(k-1,p)$ are marked.

In a second step, a footpath search is performed. All footpaths $(p_1,p_2,l)$ starting at a marked stop are processed in an arbitrary order, and labels in $\arr(\cdot,p_2)$ are updated accordingly. 
Again, each stop $p$ with an improved arrival time $\arr(k,p)$ is marked for the next round.
Note that this requires the footpath set to be transitively closed. 
The algorithm terminates after $K$ rounds or when no more stops can be marked.

\RAP{} can be modified slightly to allow for multi-criteria search.
Instead of only the arrival times $\arr(k,p)$, \MCRAP{} now maintains a \textit{label bag} $B_k(p)$ for every stop $p$ and round $k$. Each label $L\in B_k(p)$ contains an entry for every optimization criterion.
When processing a route $r\in \TTR$ at a starting stop $p_s$, a route bag $B_r$ is created, and all labels from $B(k-1,p_s)$ are updated with $\transtime(p)$ and copied into $B_r$. 
Each label in $B_r$ is associated with a trip $\raptrip \in \TTT(r)$. 
At each stop $p\in \TTP(r)$ after $p_s$, \MCRAP\ updates all labels in $B_r$, merges $B_r$ into $B_k(p)$ and finally merges all labels from $B(k-1,p)$ into $B_r$ and assigns a trip to them. 
In each step, dominated labels are removed.  

\subsection{Using Conditional Fare Networks in \MCRAP\ } \label{sec:fareinrap}
Adapting \MCRAP\ to incorporate fares is now  fairly straightforward but requires several modifications to $\TT$.
For every trip $\raptrip\in \TTT$, we  additionally store two pairs of weights and events for each stop $p$.
The first pair $\tripweight(\raptrip,p)$, $\tripevent(\raptrip,p)$ is considered when reaching the stop $p$ while iterating along $\raptrip$.
The second pair $\transweight(\raptrip,p)$, $\transevent(\raptrip,p)$ is picked up when boarding $\raptrip$ at $p$.
This distinction is necessary since there is no direct equivalent in $\TT$ to the transfer arcs used in the graph-based setting to model, e.g., surcharges.      
Furthermore, the definition of a route needs a slight adjustment: A route $r\in \TTR$ is a set of trips where all trips have the same sequence of stops 
\textit{and the same weights and events}, i.e., for $i=1,2$, $\wfa_i(\raptrip_1,p) = \wfa_i(\raptrip_2,p)$ and $\ef_i(\raptrip_1,p) = \ef_i(\raptrip_2,p)$ for all $\raptrip_1,\raptrip_2\in \TTT(r)$ and $p \in \TTP(r)$, and no trip overtakes another one.

Now, a label $L= (\raptime,\state)$  in a label bag $B_k(p)$ consists of an arrival time $\raptime$ and a fare state $\state$. Labels in route bags additionally hold the current trip $\raptrip$.
When updating a label $L = (\raptime,\state,\raptrip)$  from route bag $B_r$ at a stop $p$, the arrival time $\raptime$ is updated to $\arr(\raptrip,p)$ and the fare state is updated 
with $\transweight(\raptrip,p)$ and $\transevent(\raptrip,p)$ as in Definition \ref{def:updatefunction}.
Update steps that are associated with transfers are performed whenever labels are merged into $B_r$. 
Here, $\transweight(\raptrip,p)$ and $\transweight(\raptrip,p)$ are used for updating.
Dominance of labels is checked according to the theory developed in Section \ref{sec:algo} while also taking arrival times into account. 
Since walking is usually free of charge, fare states do not need to be updated in the footpath stage. Hence, footpaths are also not enriched with fare information.

Using \MCRAP, we obtain the Pareto-set $\jstate$, optimizing for arrival time, number of trips and fare state.
The smaller set $\jprice \subseteq \jstate$, optimizing for price instead of fare state, can be calculated in a post-processing step.

\subsection{Restricted Pareto-Sets}\label{sec:restricted}
By design of fare structures, the cheapest path is often among the fastest, as detours are penalized by increases in price, arrival time and transfers.
At other times, a negligible reduction in price might be achievable at the expense of a significant increase in travel time. Such journeys are unlikely to be chosen by a traveler.
Hence, it appears beneficial for a price-optimal search to prune all labels that are worse by a certain margin (regarding both arrival time and transfers) than the results of a normal \RAP\ query.
This is achieved by the following pruning schemes,  first introduced  by \citet{doi:10.1137/1.9781611975499.5} for general multi-objective search with \RAP{}.

Let  $\janch$ be the Pareto-set of all \textit{anchor journeys} found with a \RAP\ query, i.e., optimizing only for arrival time and number of transfers. 
We denote the arrival time of a journey $J$ by $\arr(J)$ and its number of trips by $\tr(J)$.
We aim to calculate a restricted Pareto-set $\jr$ with  $\janch  \subseteq \jr \subseteq  \jstate$ of journeys that do not have  a significantly higher arrival time or number of trips than some journey from $\janch$.
Let $\asig \in \R^+$ and $\tsig\in \N^+$ be the maximal acceptable slacks for arrival time and number of transfers, respectively.
Then, we define
\[
    \jr  := \{ J \in \jstate \,  |\, \exists  \sjanch \in \janch \text{ such that }  \arr(J) \leq \arr(\sjanch) + \asig \text{ and } \tr(J) \leq \tr(\sjanch) + \tsig  \}.
\]

To obtain a two-stage pruning scheme, we can first run a normal \RAP\ query. 
The labels obtained in this first stage can then be used to prune the multi-criteria search. 
Let $\raptime_k$ be the optimal arrival time at the target stop $p_t$ in round $k$ of the first stage (computed with \RAP). During round $k$ of \MCRAP, we prune every label that has an
arrival time $\raptime$ with $\raptime >  \raptime_k +\asig$.
This pruning scheme is called \TargetBMRAP. Note that \TargetBMRAP\ works on the assumption that $\tsig = \infty$. 
Also, the bound is not tight for all stops other than $p_t$.

The set $\jr$ can be computed  with the more involved \TightBMRAP{}.  Here, three rounds are performed.
As for \TargetBMRAP{}, we first perform a normal \RAP\ search, obtaining the Pareto-set $\janch$.
Then, multiple reverse \RAP\ queries are performed to build bounds at all stops $p\in \TTP$.
The third stage is the actual \MCRAP\ round using the previously computed bounds for pruning.

In the following, we describe the second and third stages in more detail.
Let $m := K + \tsig$, where $K$ is the maximum number of trips in any journey $ \sjanch \in \janch$.
A backward \RAP\ search with starting time $\arr(\sjanch) + \asig$ and $n_{\sjanch} : = \tr(\sjanch) + \tsig$ rounds is performed for every journey $\sjanch \in \janch$.
The backward search works on departure times instead of arrival times and transfer times are not applied when boarding a vehicle but instead when disembarking.

Each backward search computes latest departure times $\backdep(\sjanch,k,p)$ such that $p_t$ can still be reached earlier than $\arr(\sjanch) + \asig$ while using at most $k$ more trips onward from $p$.
It is possible that $\backdep(\sjanch,k,p)$ remains at its initialization value of $-\infty$.
Using $\backdep(\sjanch,n_{\sjanch}-k,p)$ for pruning labels in round $k$ of a forward \MCRAP\ search computes 
\[\{ J \in \jstate \,  |\, \arr(J) \leq \arr(\sjanch) + \asig \text{ and } \tr(J) \leq \tr(\sjanch) + \tsig  \}.\]

Carefully overlapping the labels $\backdep(\sjanch,k,p)$ results in a set of labels $\backdep(k,p)$ with $k= 1, \dots,m$, i.e.,
\[ 
    \backdep(k,p) := \max_{
    \substack{\sjanch \in \janch: \\ k \geq m - n_{\sjanch}}}
    \{\backdep(\sjanch,k - m + n_{\sjanch},p)  \}.
\]
\begin{figure}[t]
    \centering
    \begin{tikzpicture}
        	\begin{axis}[
		xlabel=Num. of Trips,
		ylabel=Arrival Time,
                xmin=0,
                xmax=6.5,
                ymin=0,
                ymax=65,
                xtick={1,2,3,4,5,6},
                ytick={10,20,30,40,50,60},
                grid=major,
                ]
        \path[fill=gray!17] (4,10) rectangle ++(1,30);
        \path[fill=gray!17] (1,30) rectangle ++(1,30);
        
        \addplot [only marks,mark=*, mark size=5pt] coordinates { (1,30)
                                                    (4,10) };
        \addplot [only marks,mark=square*, fill, mark size=4pt] coordinates { (3,30)
                                                     };
        \addplot [only marks,mark=triangle*, fill, mark size=5pt] coordinates { (4,35)
                                                     };
        \addplot[color=black, ultra thick, dotted] coordinates {
                    (5,0)
                    (5,10)
                    (5,40)
                    (2,40)
                    (2,60)
                    (0,60)
                    };
        \addplot[color=black, ultra thick, dashed] coordinates {
                    (1,70)
                    (1,30)
                    (4,30)
                    (4,10)
                    (7,10)
                    };
                \end{axis}
    \end{tikzpicture}
    \caption{Illustration of the  solution space of a \TightBMRAP{} search with $\asig=30 \textrm{ min}$ and $\tsig = 1$.
        The circle marks represent the anchor journeys from $\janch$. 
        \TightBMRAP{} prunes all journeys to the right of the dotted line spanned by those journeys.
        The area to the left of the dashed line contains no Pareto-optimal journeys.
        The journeys from $\jstate$ that fall into the area enclosed by the dashed and dotted lines form $\jr$.
        The light gray area forms $\jdelling$.
        Note that the journey marked by the square mark is in $\jr$ but not in $\jdelling$.
        The journey represented by the triangle mark is in $\jdelling$ even though it uses more trips and has a later arrival time.
        }\label{fig:restricted}
    \label{}
\end{figure}
The third stage is now a normal \MCRAP\ search enriched with two pruning rules. Let $k$ be the current round.
\begin{itemize}
    \item  Labels $L$ are not merged into to $B_k(p)$ at stop $p$ if $\arr(L) > \backdep(m-k,p)$,
    \item  Labels $L$ can be removed from $B_r$ at stop $p$ if $\arr(L) > \backdep(m-k + 1,p) + \transtime(p)$.
\end{itemize}
The summand $\transtime(p)$ in the second rule is required since it was factored into \mbox{$\backdep(m-k + 1,p)$} and no transfer is performed.
This third stage computes exactly $\jr$.

The definition we gave for $\jstate$ differs from the one given by \citet{doi:10.1137/1.9781611975499.5}.
The original authors assign each journey $J\in \jstate$ its \textit{anchor journey} $\sjanch \in  \janch$, that has the maximum number of trips smaller or equal to $\tr(J)$.
They then define the restricted Pareto-set $\jdelling$ to be 
\begin{equation*}
     \begin{array}{lrl}
         \jdelling  &:=& \{ J \in \jstate \, | \, \text{ its anch. journ. } \sjanch \in \janch \text{ has } \arr(J) \leq \arr(\sjanch) + \asig \text{ and } \\
                    && \multicolumn{1}{r}{\tr(J) \leq \tr(\sjanch) + \tsig  \}} \\
                    &\subseteq& \{ J \in \jstate \,  |\, \exists  \sjanch \in \janch \text{ such that }  \arr(J) \leq \arr(\sjanch) + \asig \text{ and } \\
                    && \multicolumn{1}{r}{\tr(\sjanch) \leq \tr(J) \leq \tr(\sjanch) + \tsig  \}} \\
                    & \subseteq&  \jr.
    \end{array}
\end{equation*}

Note that  \TightBMRAP{} as presented here and in \citet{doi:10.1137/1.9781611975499.5} does, in fact, calculate $\jr$ and not $\jdelling$  and that $\jr$ is a more interesting set to compute as it does not seem beneficial to impose 
lower bounds on the objective function.
The difference between $\jr$ and $\jdelling$ is visualized in Figure \ref{fig:restricted}.
The first fact can be seen easily by considering the case where $\asig = \infty$. 
Then, the pruning scheme does not prune anything but all journeys with more than $m$ trips.
Specifically, no lower bound on the number of trips is applied.

\subsection{Speed-Up Techniques}\label{sec:speedups}
\paragraph{Price-Based Target Pruning}\label{sec:tp}
In \RAP\ as well as Dijkstra's algorithm, it is possible to use \textit{target pruning} \citep{DBLP:journals/transci/DellingPW15} to delete labels that are worse than the labels that have already been found at the target stop.
Naturally, the same speed-up technique is also possible for our algorithm.
Moreover, we need not use $\leqc$ to compare fare states.
Since the labels at the target stop are never updated and the price function $\price$ is non-decreasing, a partial journey already more expensive than the incumbent cheapest journey cannot be price-optimal.
Hence, in round $k$ of \MCRAP{}, we can prune all labels with a fare state $\state$ with $ \price(\tf(\state)) \geq \price^*$, with $\price^*$ being the best price at the target stop with at most $k$  trips.
We refer to this technique as \textit{Price-Based Target Pruning (PTP)} .
\paragraph{Fare-Specific Speed-ups}\label{sec:fss}
Certain dimensions in $\pmonoid$ might only be relevant for some tickets in $\tickets$.
For example, many short-distance tickets depend on the number of stops visited while this number is irrelevant for all other tickets that can be reached from that ticket. 
We can therefore alter the comparison operator $\leqc$ for those tickets to ignore the number of stops.
Hence, more labels become comparable, which results in a  smaller Pareto-set $\fssset$ with $\jprice \subseteq \fssset \subseteq \jstate$. 
When using \TightBMRAP{}, this results in a set $\rfssset$ with $\jrprice \subseteq \rfssset \subseteq \jrstate$.
We refer to this technique as \textit{fare-specific speed-up (FSS)}.

\section{Computational Results}\label{sec:results}
We implemented the \MCRAP\ algorithm in C++17 compiled with gcc 9.3.0 and $-\text{O}3$ optimization. 
All tests were conducted on Dell Poweredge M620 machines with 64 GB of RAM.
While the general structure of the MDV fare structure is captured in our model, our computations deviate from the prices charged by MDV in the following two cases:
A list of relations, that are, contrary to the general rules, not eligible for the short-distance discount, is considered.
Moreover, stops and fare zones that a route passes through without stopping are not represented in the available data and therefore cannot be considered.

Our dataset was built from  the publicly available timetable data \citep{MDVData2019} and fare structure of MDV \citep{MDVFare2019}.
Structured fare data is not public and was obtained separately via InfraDialog GmbH.
We extracted a timetable spanning two days from July 1, 2019 to July 2, 2019. 
The resulting timetable contains 4371 stops, 36670 trips, 5576 routes, and 845 footpaths. This original footpath set was not transitively closed.
Since \RAP\ requires a transitively closed footpath set \citep{DBLP:journals/transci/DellingPW15}, we computed its transitive closure and obtained 1029 footpaths.
We then chose a test set of 5000 origin-destination pairs (OD pairs) uniformly at random from the set of stops.
After removing all OD pairs that were not connected in the time interval starting at 08:00 a.m.\ on July 1, 2019, a total of 
4964 OD pairs remained.

As of 2019, the fare structure of MDV contained 56 fare zones, 17 cities with a city fare, and 30 overlap areas containing 191 stops. Overlap areas were implemented by route duplication, as lined out in Section \ref{par:neutral}.
After route duplication, the timetable contained 49072 trips in 7835 routes.
All queries were performed with a starting time of 08:00 a.m.

\begin{table}[t] 
{
\centering
\tiny 
\tabcolsep=0.07cm
\renewcommand{\arraystretch}{2}
\begin{tabular}{lccccccccccccccccccccccc}
\toprule
& \multicolumn{4}{c}{\fns Criteria}&\multicolumn{2}{c}{\fns Speed-Up}&\multicolumn{2}{c}{\fns Slack}
&\multicolumn{2}{c}{\fns \#Scan}  &\multicolumn{2}{c}{\fns Time[ms]} &\multicolumn{2}{c}{\fns \#Rounds} &\multicolumn{2}{c}{\fns \#Jn.} &\multicolumn{2}{c}{\fns \#PJn.} \\
\cmidrule(lr){2-5}\cmidrule(lr){6-7}\cmidrule(lr){8-9}\cmidrule(lr){10-11}\cmidrule(lr){12-13}\cmidrule(lr){14-15}\cmidrule(lr){16-17}\cmidrule(lr){18-19}
& \rotatebox[origin=c]{75}{\fns trips} & \rotatebox[origin=c]{75}{\fns time} & \rotatebox[origin=c]{75}{\fns zones} &
\rotatebox[origin=c]{75}{\fns fare} &  \rotatebox[origin=c]{75}{\fns PTP} & \rotatebox[origin=c]{75}{\fns FSS} & \rotatebox[origin=c]{75}{\fns arr} &
\rotatebox[origin=c]{75}{\fns trip} & \fns Avg. & \fns Sd.  & \fns Avg. & \fns Sd. & \fns Avg. & \fns Sd. & \fns Avg. & \fns Sd. & \fns Avg. & \fns Sd. \\
\hline
\fns  \RAP & \fns\textbullet& \fns\textbullet& \fns\textobullet& \fns\textobullet& \fns\textobullet& \fns\textobullet & -- & -- &
\fns 17295&\fns  5883&\fns 3.27&\fns1.31&\fns  6.64&\fns0.79&\fns1.53&\fns0.66 & \fns -- &  \fns  -- \\
\fns  \MCRAP & \fns\textbullet& \fns\textbullet& \fns\textbullet& \fns\textobullet& \fns\textobullet& \fns\textobullet & -- & -- &
\fns 29881 &\fns 6118 &\fns 4675&\fns 6172 &\fns6.92&\fns0.48&\fns 47.00 &\fns 45.95 &\fns  --  &\fns  -- \\
\fns  \MCRAP & \fns\textbullet& \fns\textbullet& \fns\textobullet& \fns\textbullet& \fns\textobullet& \fns\textbullet  & -- & --    & 
\fns 29891    &\fns    5025&\fns   957.54&\fns    260.73&\fns  6.97 &\fns        0.26&\fns10.05&\fns        13.27&\fns  2.62&\fns1.43 \\
\fns  \MCRAP & \fns\textbullet& \fns\textbullet& \fns\textobullet& \fns\textbullet& \fns\textbullet& \fns\textbullet  & -- & --  &   
\fns23460&        \fns8263 &  \fns243.09&         \fns283.08&\fns6.86&       \fns0.55 &\fns3.00&        \fns1.80  &\fns2.62&        \fns 1.43 \\
\hline
\fns  \TargetBMRAP & \fns\textbullet& \fns\textbullet& \fns\textobullet& \fns\textbullet& \fns\textbullet& \fns\textbullet  &\fns 15 & -- &
\fns 20132&\fns        8502  &\fns 71.16&\fns        87.10  &\fns 6.72&\fns       0.72  &\fns 1.72&\fns       0.82   &\fns 1.66&\fns       0.74 \\
\fns  \TargetBMRAP & \fns\textbullet& \fns\textbullet& \fns\textobullet& \fns\textbullet& \fns\textbullet& \fns\textbullet  &\fns 30 & -- &
\fns20637&\fns8412 &\fns 74.86&\fns88.75  &\fns 6.75&\fns0.70 &\fns   1.80&\fns0.89&\fns 1.72&\fns0.80 \\
\fns  \TargetBMRAP & \fns\textbullet& \fns\textbullet& \fns\textobullet& \fns\textbullet& \fns\textbullet& \fns\textbullet  &\fns 60 & -- & 
\fns 21394&\fns        8311 &\fns 81.45&\fns        91.57 &\fns 6.78&\fns       0.66  &\fns 1.95&\fns        1.00 &\fns1.85&\fns0.89 \\
\hline
\fns  \TightBMRAP & \fns\textbullet& \fns\textbullet& \fns\textobullet& \fns\textbullet& \fns\textobullet& \fns\textobullet  &\fns 15 &\fns 1 &
\fns3192&\fns        1963&\fns 8.44&\fns        14.46&\fns 5.17&\fns          1.08 &\fns 7.52&\fns        10.28&\fns 1.63&\fns       0.72\\
\fns  \TightBMRAP & \fns\textbullet& \fns\textbullet& \fns\textobullet& \fns\textbullet& \fns\textobullet& \fns\textobullet  &\fns 30 &\fns 1 &
\fns3551&\fns        2071&\fns 9.40&\fns        16.47&\fns 5.17&\fns          1.08&\fns9.06&\fns        12.29&\fns 1.68&\fns       0.78 \\
\fns  \TightBMRAP & \fns\textbullet& \fns\textbullet& \fns\textobullet& \fns\textbullet& \fns\textobullet& \fns\textobullet  &\fns 60 &\fns 1 &
\fns4316&\fns        2384&\fns 11.97&\fns          18.17&\fns 5.17&\fns          1.08&\fns13.29&\fns        16.02&\fns 1.78&\fns       0.86 \\
\fns  \TightBMRAP & \fns\textbullet& \fns\textbullet& \fns\textobullet& \fns\textbullet& \fns\textobullet& \fns\textobullet  &\fns 15 &\fns 2 &
\fns4606&\fns        3167&\fns 20.79&\fns        99.60&\fns 5.84&\fns        1.07&\fns11.90&\fns        21.05&\fns 1.66&\fns       0.74 \\
\fns  \TightBMRAP & \fns\textbullet& \fns\textbullet& \fns\textobullet& \fns\textbullet& \fns\textobullet& \fns\textobullet  &\fns 30 &\fns 2 &
\fns 5147&\fns        3154&\fns 22.75&\fns    104.12 &\fns 5.90&\fns  1.03&\fns14.09&\fns 23.87&\fns 1.71&\fns       0.80 \\
\fns  \TightBMRAP & \fns\textbullet& \fns\textbullet& \fns\textobullet& \fns\textbullet& \fns\textobullet& \fns\textobullet  &\fns 60 &\fns 2 &
\fns6487&\fns        3359&\fns 29.47&\fns         107.53&\fns 5.97&\fns       0.99&\fns20.98&\fns        29.70&\fns 1.83&\fns       0.88\\
\fns  \TightBMRAP & \fns\textbullet& \fns\textbullet& \fns\textobullet& \fns\textbullet& \fns\textbullet& \fns\textbullet  &\fns 15 &\fns 1  &
\fns2954&\fns         1828&\fns 6.22&\fns        3.14&\fns  5.17&\fns    1.08&\fns 1.68&\fns       0.79&\fns 1.63&\fns       0.72 \\
\fns  \TightBMRAP & \fns\textbullet& \fns\textbullet& \fns\textobullet& \fns\textbullet& \fns\textbullet& \fns\textbullet  &\fns 30 &\fns 1  &
\fns3274&\fns        1919&\fns 6.69&\fns           3.29&\fns  5.17&\fns          1.08&\fns 1.75&\fns       0.85&\fns 1.69&\fns       0.78 \\
\fns  \TightBMRAP & \fns\textbullet& \fns\textbullet& \fns\textobullet& \fns\textbullet& \fns\textbullet& \fns\textbullet  &\fns 60 &\fns 1  &
\fns3984&\fns        2209&\fns 7.82&\fns           3.67&\fns  5.17&\fns          1.08&\fns1.88&\fns       0.97&\fns 1.78&\fns       0.86 \\
\fns  \TightBMRAP & \fns\textbullet& \fns\textbullet& \fns\textobullet& \fns\textbullet& \fns\textbullet& \fns\textbullet  &\fns 15 &\fns 2 &
\fns4155&\fns        2905&\fns 8.11&\fns        6.28&\fns 5.47&\fns        1.08&\fns 1.71&\fns       0.81&\fns 1.66&\fns       0.74\\
\fns  \TightBMRAP & \fns\textbullet& \fns\textbullet& \fns\textobullet& \fns\textbullet& \fns\textbullet& \fns\textbullet  &\fns 30 &\fns 2 &
\fns4606&\fns         2888&\fns 8.76&\fns        6.32&\fns 5.53&\fns         1.0&\fns 1.78&\fns       0.88&\fns 1.71&\fns       0.80\\
\fns  \TightBMRAP & \fns\textbullet& \fns\textbullet& \fns\textobullet& \fns\textbullet& \fns\textbullet& \fns\textbullet  &\fns 60 &\fns 2 &
\fns5758&\fns        3055&\fns 10.68&\fns        6.73&\fns 5.62&\fns        1.05&\fns 1.93&\fns        1.00&\fns 1.83&\fns       0.88\\
\bottomrule
\end{tabular}}

\caption{Computational Results.
Evaluation of different \RAP{} variants on the MDV dataset. All experiments were conducted with a maximum of seven rounds.
 For each algorithm, the table reports the optimization criteria, the speed-up techniques employed, and the arrival and trip slacks, if applicable.
 We report the number of scanned routes (\textit{\#Scan}), the average running time (\textit{Time}), the number of rounds performed (\textit{\#Rounds}),
 the number of journeys found (\textit{\#Jn.}) and the number of journeys that are dominated w.r.t. to the price (\textit{\#PJn.}). For each result, both the average and standard deviation are reported.
 The algorithms in rows 2-17 run at least one \RAP{} and exactly one \MCRAP query.
 In this case, \textit{\#Scan} and \textit{Time} are only given for the \MCRAP{} run, while the running time is summed up overall \RAP{} and \MCRAP{} invocations.}\label{table:seven}
\end{table}

In a first experiment, we ran several algorithms for each OD pair: the standard \RAP-algorithm; a \MCRAP{} variant optimizing for the arrival time, the number of trips, and the set of fare zones;
then, optimizing for the arrival time, number of trips, and the fare state, two \MCRAP{} variants as well as \TargetBMRAP{} and \TightBMRAP{}  with various configurations for arrival and trip slacks. 
This results in a total of 19 algorithms in the first experiment.
When optimizing for fare state, a postprocessing step is performed to remove all journeys that are in the Pareto-set with regard to fare state but not to price.
Note that the \MCRAP{} variant for fare zones is a relaxed version of the purely monoid-based approach from Section \ref{subsec:modelmonoids} and provides a lower bound on its performance. The full approach was implemented but ran out of memory on most queries and is therefore omitted in the results.

The experiment was conducted with a maximum of seven rounds. This number was chosen, as we believe it represents a sufficiently generous upper bound on the maximum number of transfers a traveler is willing to undertake. The results of the experiment are reported in Table \ref{table:seven}.
A standard \RAP\ run takes on average 3.27 ms with a standard deviation of 1.31 ms.
The computed Pareto-set contains 1.53 journeys on average.
If only fare zones are considered as additional operation criterion, the average runtime increases 
to 4.67\,s, with a standard deviation of 6.17\,s.
This indicates that run times of up to around 10\,s are not out of the ordinary. 

While FSS in \MCRAP\ alone does not suffice to obtain acceptable run times, the combination of FSS and PTP 
produces an average run time of  243.09 ms.
Although this performance is no longer prohibitive for practical application, the standard deviation remains high at 283 ms. 
\begin{table}[t]
\centering
\tiny 
\tabcolsep=0.07cm
\renewcommand{\arraystretch}{2}
{\begin{tabular}{lcccccccccccccccccc}
\toprule
& \multicolumn{2}{c}{\footnotesize Slack} &\multicolumn{2}{c}{\footnotesize \#Scan}  &\multicolumn{2}{c}{\footnotesize Time[ms]} &\multicolumn{3}{c}{\footnotesize \#Rounds} &\multicolumn{2}{c}{\footnotesize \#Jn.} &\multicolumn{2}{c}{\footnotesize \#PJn.} \\
\cmidrule(lr){2-3}\cmidrule(lr){4-5}\cmidrule(lr){6-7}\cmidrule(lr){8-10}\cmidrule(lr){11-12}\cmidrule(lr){13-14}
&  \rotatebox[origin=c]{75}{\footnotesize arr} &
        \rotatebox[origin=c]{75}{\footnotesize trip} & \footnotesize Avg. & \footnotesize Sd.  &  \footnotesize Avg. & \footnotesize Sd. & \footnotesize Avg. & \footnotesize Sd. &\footnotesize Max & \footnotesize Avg. & \footnotesize Sd. & \footnotesize Avg. & \footnotesize Sd. \\
\hline
        \footnotesize  \RAP & -- & --&
\fns17496&\fns        6111&\fns 3.28 &\fns         1331.9&\fns 7.31&\fns         1.35&\fns 12 &\fns 1.54&\fns       0.66&\fns -- &\fns       -- \\
\footnotesize  \MCRAP & -- & --  &   
\fns32757&\fns        16429&\fns 389.52&\fns         710.75&\fns 11.10&\fns        3.24&\fns 21 &\fns  3.16&\fns        1.91&\fns 2.77&\fns        1.56\\
\hline
\footnotesize  \TargetBMRAP & \fns 15  & -- &
 \fns23007&\fns        12106&\fns 82.47&\fns         125.89&\fns 8.57&\fns        2.34&\fns 17 &\fns 1.73&\fns       0.84&\fns 1.68&\fns       0.77\\
 \footnotesize  \TargetBMRAP & \fns 30  & --&
\fns23724&\fns        12093&\fns 86.38&\fns         128.22&\fns 8.73&\fns        2.33&\fns 17&\fns 1.81&\fns       0.91&\fns 1.73&\fns       0.82\\
\footnotesize  \TargetBMRAP & \fns 60  & -- & 
\fns24980&\fns        12186&\fns 94.53&\fns         131.83&\fns 9.00&\fns        2.35&\fns 17&\fns 1.97&\fns        1.03&\fns 1.87&\fns       0.91\\
\hline
\footnotesize  \TightBMRAP & \fns 15 &\fns 1  &
\fns2985&\fns        1871&\fns 6.68&\fns           3.28&\fns 5.20&\fns        1.14&\fns 9 &\fns 1.69&\fns       0.81&\fns 1.63&\fns       0.73\\
\footnotesize  \TightBMRAP & \fns 30 &\fns 1 &
\fns3309&\fns        1973&\fns 7.19&\fns        3.44&\fns 5.20&\fns        1.14&\fns 9 &\fns 1.75&\fns        0.87&\fns 1.69&\fns        0.79\\
\footnotesize  \TightBMRAP & \fns 60 &\fns 1  &
\fns4039&\fns        2307&\fns 8.36&\fns        3.89&\fns 5.20&\fns        1.14&\fns 9 &\fns 1.89&\fns       0.96&\fns 1.79&\fns       0.89\\
\footnotesize  \TightBMRAP & \fns 15 &\fns 2 &
\fns4360&\fns        3145&\fns 9.02 &\fns        7.27&\fns 5.52&\fns        1.16&\fns 10 &\fns 1.72&\fns       0.83&\fns 1.66&\fns       0.76\\
 \footnotesize  \TightBMRAP & \fns 30 &\fns 2 &
\fns4833&\fns        3151&\fns 9.68&\fns        7.43&\fns 5.58&\fns         1.15&\fns 10 &\fns 1.79&\fns        0.90&\fns 1.72&\fns       0.81\\
\footnotesize  \TightBMRAP & \fns 60 &\fns 2 &
\fns6071&\fns        3378&\fns 11.87&\fns         7.97&\fns 5.67&\fns        1.16&\fns 10 &\fns 1.94&\fns        1.02&\fns 1.84&\fns       0.90\\
\bottomrule
\end{tabular}}
\caption{Computational Results.
Evaluation of different \RAP{} variants on the MDV dataset. All experiments were conducted with a maximum of 25 rounds. The maximum number of rounds performed across all algorithms was 21. Hence, no query was cancelled prematurely. 
All algorithms other than \RAP{} optimized for arrival time, number of trips and fare state. For all of those, both FFS and PTP were activated. 
The same performance indicators as in Table \ref{table:seven} are reported.
For \TargetBMRAP{} and \TightBMRAP{}, \textit{\#Scan} and {\#Rounds} report only on the last \MCRAP{} call whereas \textit{Time} reports the overall run time.
}\label{table:twentyfive}
\end{table}

The Pareto-set computed with \MCRAP{} contains 3 journeys on average, of which 2.62 are also price-optimal.
Turning off PTP increases the number of computed journeys to  10.5; the Pareto-set of the zone-based \MCRAP{} contains 47 journeys on average.
Hence, even though all tickets of MDV are in the full-comparability set $\fullcomp$, a high number of superfluous journeys will be generated when no additional techniques are employed.
This effect is mainly caused by the fare zones, as they form an only partially ordered set.
The 2.62 price-optimal journeys mark an increase of 71\% over the 1.53 journeys found with \RAP.
It might, however, contain journeys with an undesirable trade-off between arrival time and number of trips and price.
To obtain restricted Pareto-sets with a reasonable trade-off, we ran \TargetBMRAP{} and \TightBMRAP{} with arrival time slacks of 
15 min, 30 min and 60 min and in the case of \TightBMRAP{} with trip slacks of 1 or 2.
Using \TargetBMRAP{} reduces run times to between 71.16 ms and 81.45 ms while restricting the size of the Pareto-set to between 1.66 and 1.85.
This corresponds to between 8.5\% and 21\% more journeys compared to \RAP.
We ran \TightBMRAP{}  both with and without PTP and FSS. \TightBMRAP{} performs reasonably well even without PTP and FSS on average with run times of up to 29.47 ms.
However, a  comparatively high standard deviation of up to 107.53 ms hints at high performance variability.
When using both PTP and FSS run times decrease to at most 10.68 ms. Even more pronounced is the decrease in the standard deviation to at most 6.73 ms.
Compared to \RAP{} there are between 6.5\% and 19.6\% more journeys.
Consequently, conditional fare networks used within \TightBMRAP{} appear well-suited to provide the user with price-optimized alternative routes while 
increasing run times only insignificantly.

A second experiment was conducted without an upper bound on the number of rounds. 
The table $\arr$ was implemented as a fixed-size array with space for 25 rounds.
Since the maximum number of rounds performed was 21, no journeys  were cut off due to early termination. 
We excluded all algorithms that had already performed poorly in the first experiment.
Namely, these are the \TightBMRAP{} variants without additional speed-up techniques, \MCRAP{} for fare zones and \MCRAP{} for fare states without price-based target-pruning.

For \MCRAP, we see a significant increase in the run time of about 60\% and a even more pronounced increase in its standard deviation of 151\% compared to the variant with only seven rounds.
All variants of \TargetBMRAP{} exhibit  similar behavior, albeit to a lesser degree.
Here, the average run time increased by about 16\%  and the standard deviation by about 44\%.
Note that for \RAP\ the run time increased by only a marginal 0.01 ms and that, when compared to \RAP{}, \MCRAP{} needs to perform significantly more rounds.

For all settings of slack variables for \TightBMRAP{}, the increase of the run time remains minimal at around a millisecond. Hence, \TightBMRAP{} remains highly competitive, whereas
the simple \MCRAP{} implementation suffers from considerably degraded performance.
It is furthermore noteworthy that in all slack settings, the multi-criteria part of \TightBMRAP{} needs to both scan significantly fewer routes and  perform fewer rounds than even the standard \RAP.
This clearly speaks to the strength of the pruning scheme used in \TightBMRAP.

\section{Conclusion}\label{sec:conclusion}
We presented conditional fare networks, a novel framework for modeling complex fare structures of public transportation provi\-ders.  It is independent of the MOSP algorithm used and can be used to solve price-optimal earliest arrival queries in real-world networks.  
Since fare structures are often composed of various fare strategies, this requires the optimization of several objective functions.
In the MDV case study, these were the fare zones, city fares, transfers, the number of stops visited, and the length of the path in kilometers.
Performing naive multi-objective queries for all these objectives results in high run times with high variance and the computation of many journeys that are not price-optimal.
In contrast, using a CFN-based variant of the \MCRAP{} algorithm, we were able to mitigate these effects and reduce run times to around 400 ms on average, which we deem acceptable for commercial applications.
Combining CFNs with the \TightBMRAP{} algorithm reduced run times further to at most 12 ms with low variance, while still computing a reasonably sized restricted Pareto-set when choosing appropriate arrival and trip slacks.
Fare structures can differ quite significantly between public transportation providers.
Hence, a systematic evaluation of CFNs on other public transit networks is certainly worthwhile.
As MDV operates in a largely rural area with two only medium-sized urban centers, a study of larger urban centers such as Berlin or Madrid seems especially interesting. 
However, while timetables are widely available, fare data is not.
Especially, machine-readable mappings from stations to fare zones are generally not publicly available.
When they are, the data is often incomplete and requires a significant manual polishing effort.

\section*{Acknowledgement}
We thank MDV and InfraDialog Gmbh for providing the data for this study. 
We owe special gratitude to our master's student Rick Grap for implementing the \TightBMRAP-algorithm and pointing out the difference between the sets $\jdelling$ and $\jr$ 
in Section \ref{sec:restricted}.

\bibliographystyle{elsarticle-num-names}
\bibliography{EulLinBornPriceOpt} 

\appendix
\section{Computational Complexity and Links to Automata Theory}\label{sec:complexity}
	
In this section, we study the computational complexity of \POEAP.
The intractability of general multi-objective shortest path problems is well-established \citep{HansenBicriterion}.
The standard argument is here, that the output might be exponential in size.
Note that in \POEAP\ the number of tickets $|\tickets|$ is finite.
Therefore, it is always possible to find a valid set of Pareto-optimal solutions with size $\leq |\tickets|$.
This begs the question whether \POEAP\ can be solved in polynomial time. 
The answer depends on whether the monoid $\pmonoid$ is considered as part of the encoding length. 

If $\pmonoid$ is not considered part of the input then even the single-criterion version of \POEAP\
with travel time functions $c\equiv 0$ (denoted by \POEAPN) is NP-hard.
\citet{BlancoBorndoerferHoangetal.2016} proved finding a shortest path with respect to weights from the monoid $(2^Z,\subseteq, \cup)$.
with fare zones $Z$ to be NP-hard.
The proof was obtained using a reduction from the minimum-color single-path problem \citep{broersma2005paths}.  
As this is a special case of \POEAPN, the NP-hardness of \POEAPN{} follows immediately.

In the following, we provide an alternative reduction of the path with forbidden pairs problem to \POEAPN.
Its NP-completeness was established by \citet{1702370}.

\begin{definition}[Path with Forbidden Pairs Problem]
    Let $G=(V,A)$ be a directed graph and $(a_i,b_i), i \in I$ a list of forbidden pairs. 
    Let $s,t\in V$.
    The path with forbidden pairs problem asks whether there is a $s,t$-path  $p$ in $G$ such that 
    $\forall i\in I :  a_i \notin p \vee b_i\notin p.$
    That is, $p$ contains at most one vertex of each pair $(a_i,b_i)$.
\end{definition}
	
\begin{theorem}[NP-hardness of \POEAPN]
    The single-criterion problem \POEAPN{}  is NP-hard. Its canonical decision problem \POEAPND{}  is NP-complete if the evaluation time of $\transfunc$ is polynomially bounded.
\end{theorem}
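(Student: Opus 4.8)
The plan is to prove NP-hardness by a polynomial reduction from the path with forbidden pairs problem to \POEAPN, and then to check that the decision variant lies in NP under the stated hypothesis. So let a forbidden-pairs instance be given: a digraph $G=(V,A)$, terminals $s,t\in V$, and pairs $(a_i,b_i)$, $i\in I$. Put $F:=\bigcup_{i\in I}\{a_i,b_i\}$. I would equip $G$ with the conditional fare network whose monoid is the partially ordered positive monoid $(2^F,\cup,\subseteq)$ with neutral element $\emptyset$; whose ticket graph $\ticketgraph$ has vertex set $\tickets=\{\ticket_0,\ticket_1\}$ and the single arc $(\ticket_0,\ticket_1)$, with prices $\price(\ticket_0)=0$ and $\price(\ticket_1)=1$ (trivially non-decreasing along the one directed path); whose only event is a dummy $\fareevents=\{\nullevent\}$; whose arc data is $\wfa(u,v):=\{v\}\cap F$ and $\ef(u,v):=\nullevent$ for every $(u,v)\in A$; whose initial states are $\startstate(s):=(\ticket_0,\{s\}\cap F)$ and $\startstate(v):=(\ticket_0,\emptyset)$ for $v\neq s$; and whose transition function sends every $(\ticket_1,\monoidel,\fareevent)$ to $\ticket_1$ and is given on $\ticket_0$ by
\[
\transfunc(\ticket_0,\monoidel,\fareevent):=
\begin{cases}
\ticket_1 & \text{if }\{a_i,b_i\}\subseteq\monoidel\text{ for some }i\in I,\\
\ticket_0 & \text{otherwise.}
\end{cases}
\]
Finally I take $c\equiv 0$. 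All defining conditions of a CFN are immediate, and the construction is polynomial in the input: the monoid $2^F$ is never materialised --- its elements are subsets of $F$, its operation is union, and the rule defining $\transfunc$ is evaluated in $O(|I|)$ --- which is exactly the regime ``$\pmonoid$ not part of the input'' of the theorem.

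For correctness I would track the fare state along an arbitrary $s,t$-path $p=(s=v_0,\dots,v_n=t)$. By Definition~\ref{def:updatefunction} and induction, $\wfs(\state(v_0,\dots,v_j))=\{v_0,\dots,v_j\}\cap F$, a set that is $\subseteq$-monotone in $j$; hence the ticket attains (and then retains) $\ticket_1$ if and only if $\{v_0,\dots,v_n\}$ contains both endpoints of some forbidden pair, i.e. $\price(p)=0$ iff $p$ meets at most one endpoint of every pair. Therefore a simple forbidden-pairs-respecting $s,t$-path is in particular an $s,t$-path of price $0$; conversely, shortcutting any (possibly non-simple) price-$0$ $s,t$-path to a simple $s,t$-path only shrinks its vertex set, so the result still respects all forbidden pairs. (If $t$ is unreachable from $s$ both problems are vacuously negative, checkable in linear time.) Thus the forbidden-pairs instance is a yes-instance iff the cheapest $s,t$-path in the constructed CFN has price $0$; in particular a polynomial algorithm for \POEAPN{} would decide the path with forbidden pairs problem, so \POEAPN{} is NP-hard, and taking $k=0$ makes \POEAPND{} NP-hard as well.

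It remains to place \POEAPND{} in NP when $\transfunc$ is polynomial-time evaluable. A certificate is a simple $s,t$-path $p$, of length at most $|V|-1$; to verify $\price(p)\le k$ one performs the at most $|V|-1$ updates of Definition~\ref{def:updatefunction} --- each a single monoid addition together with one evaluation of $\transfunc$ --- and then compares $\price(\tf(\state(p)))$ with $k$, all in polynomial time. The one point that deserves an argument is that restricting to simple paths does not change the optimal price: along any walk the weight is non-decreasing (positivity) and the ticket only moves forward in the acyclic $\ticketgraph$, so the price is non-decreasing along walks, and together with monotonicity of $\transfunc$ --- which holds for the instances produced by the reduction, and which one would expect of any sensible fare structure --- this shows that excising a cycle from a path can never raise its price. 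I expect exactly this last item --- and, outside the reduction, the question of whether a polynomially bounded certificate still exists for pathological, non-monotone transition functions --- to be the only delicate point; the hardness reduction itself is essentially forced once $2^F$ is used to record which pair-endpoints have been visited.
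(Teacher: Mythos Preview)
Your proof is correct and follows the same reduction from the Path with Forbidden Pairs problem as the paper; the only real difference is the encoding of the monoid---you record the \emph{set} of visited pair-endpoints via $(2^F,\cup,\subseteq)$ and put the weight on the arc's head, whereas the paper counts visits per pair (capped at $2$) via $(\{0,1,2\}^I,+,\le)$ with weight on the tail, which forces its slightly awkward ``w.l.o.g.\ $t\notin F$'' step and is ambiguous when a vertex belongs to several pairs. Your treatment of NP-membership is also more careful than the paper's, which silently takes simple paths as certificates without justifying that walks cannot be strictly cheaper; you correctly flag that this hinges on monotonicity of $\transfunc$, an assumption the general CFN definition does not impose.
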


\begin{proof}
    The canonical decision problem   \POEAPND{} of \POEAPN{} asks whether there is a path $p$ in $G$ with $\price(p) \leq k$ for some $k\in\mathbb{Q}^+$. 
    We show NP-hardness by reducing  the path with forbidden pairs problem to \POEAP.
    Let $G=(V,A)$ be a directed graph, $s,t\in V$ and $ (a_i,b_i), i\in I$, a list of forbidden pairs.
    We construct a  CFN $\farenetwork = \sixtuple$ as follows.
    Let $\ticketgraph =(\tickets,\tarcs)$ be the ticket graph with tickets $\tickets= \{\ticket_1, \ticket_2\}$ and $\tarcs= \{(\ticket_1,\ticket_2)\}$. 
    Furthermore, we define $\pmonoid$ as follows. Let $\monoidset = \{0,1,2\}^I$. 
    The sum of $x,y\in \monoidset$, is defined as $x+y := ( \min(2,x_i+y_i) )_{i\in I}$. We have $x\leq y$ if and only if $x_i \leq y_i$ for all $i\in I$.
    We set the ticket prices to $\price(\ticket_1) = 0$ and $\price(\ticket_2) = 1$.
    Now, we let 
    \begin{align} 
    \wfa(v_1,v_2)&= \begin{cases}  e_i & v_1 = a_i \text{ or } v_1 = b_i \text{ for some }i \in I \\
    0 & \text{otherwise}
    \end{cases} \quad\quad	\Forall  (v_1,v_2) \in A 
    \intertext{and}
        \transfunc( \ticket_1,\monoidel,\fareevent) &= \begin{cases}
    \ticket_2 & \exists i \in I: h_i = 2\\
    \ticket_1 & \text{otherwise}
    \end{cases}
    \end{align}
    where $e_i  \in \{0,1\}^I$ is the standard unit vector with $e_{ii} = 1$. 
    Let $p$  be a simple path.
    W.l.o.g.\ we assume that the last vertex of $p$ is not in $ \bigcup_{i\in I} \{a_i,b_i\}$.
    Now, assume that $p$ has ticket $\tf(\state(p))= \ticket_2$.
    Hence, there must be an $i \in I$ s.t. $\wfs(\state_i) = 2$ and therefore both $a_i\in p $ and $b_i\in p$.
    Conversely, if both $a_i\in p $ and $b_i\in p$,
    we must have $\state_i(p) = 2$ and therefore $\tf(\state(p))= \ticket_2$.
    The CFN $\farenetwork$ can be built in polynomial time, as the weights $\wfa$ can be built in $\mathcal{O}(|A||I|)$ time.
    Hence, the path with forbidden pairs problem can be polynomially reduced to \POEAPN.
   
    It remains to show that \POEAPND{} is in NP.
    When evaluating the fare state of $p$ as many calls to $\transfunc$ have to be performed as there are arcs in $p$.
    Every simple $s,t$-path has clearly $\leq |A|$ edges. 
    Hence, the overall evaluation time of $\transfunc$  for $p$ is polynomially bounded.
    Finally, finding $\price(p)$ from $\state(p)$ requires a simple table-lookup. 
    Hence, checking $\price(p) \leq k$ can be done in polynomial time.	 
\end{proof}

Now, assume that the underlying monoid $\pmonoid$ is of finite size and consider $|\monoidset|$ a part of the encoding length.
Then, POEAP can be solved in polynomial time using techniques already used by \citet{Barrett:2000:FPP:586846.586970} for the regular language constrained shortest path problem.
We begin with a short recapitulation of crucial results from automata \citep{Hopcroft+Ullman/79/Introduction}. 
\begin{definition}[Deterministic finite automaton]
    A \emph{deterministic finite automaton} (DFA) is a 5-tuple $\automaton$, where $Q$ is a finite set of \textit{states}, $\Sigma$ is a finite \textit{input alphabet}, $q_0\in Q$ is the \textit{initial state}, $\mathcal{F}\subseteq Q$ is the set of \textit{final states} and $\delta: Q\times\Sigma \rightarrow Q$ is the \textit{transition function}.
\end{definition}
The words accepted by a DFA are exactly the words of a regular language. Hence, we can use DFAs to define the regular language constrained shortest path problem (REG-ShP)  \citep{Barrett:2000:FPP:586846.586970}.
In this problem, each arc  $a\in A$ of a directed graph $G=(V,A)$ is associated with a letter $\sigma(a) \in \Sigma$. The state of a path $p=(v_0,\dots,v_n)$ is then recursively defined via 
\begin{align*}
    q( (v_0,\dots, v_i) ) &:= \delta(q( (v_0,\dots, v_{i-1})) ,\sigma(v_{i-1},v_i)) \\
q(v_0) &:= q_0.
\end{align*}

\begin{definition}[Regular-language constrained shortest path problem (REG-ShP)]
Let a directed graph $G=(V,A)$, weights $c:A\rightarrow \mathbb{Q}^+$, a DFA  $\automaton$, letters $\sigma(a), a\in A$, a source $s\in V$ and a destination  $t\in V$ be given. 
Find a shortest $s,t$-path $p$  such that $q(p)\in \mathcal{F}$.
\end{definition}
	
Conditional fare networks $\sixtuple$ can be recast as DFAs for a fixed starting stop $v\in V$ and final fare state $\state \in \statespace$ if $|\monoidset|$ is finite.
Thus, POEAP can be tackled by solving a series of formal-language constrained shortest path problems and selecting the price-optimal path from the successful queries. 

Assume the monoid $\pmonoid$ is trivial, i.e. $\monoidset=\{e\}$ with $e+e=e$ for some element $e$. 
Then, the transition functions $\transfunc(\cdot,\cdot,\cdot)$ can be considered independent of $\pmonoid$, and we can construct a DFA $D(s) = \automaton$ in a straightforward manner by setting
$Q:=\tickets$, $\Sigma := \fareevents$, $q_0 := \ticket'$ where $(\ticket',e) = \startstate(s)$ is the initial fare state of $s$ and $\mathcal{F} := \{\ticket\}$ for some $\ticket\in\tickets$. 
The transition function $\delta$ is defined as
\begin{align*}
    \delta(q, \sigma) :=  \transfunc(q,x,\sigma) \quad \Forall q\in Q , \sigma\in\Sigma.
\end{align*}
As there is no direct equivalent for nontrivial $H$ in a DFA, we incorporate it into the state set $Q$.
We set $Q:=\statespace = \tickets \times \monoidset$, $\Sigma := \monoidset \times \fareevents$, $q_0 := \startstate(v)$
and $\mathcal{F} = \{\state\}$ for some fare state $\state\in\statespace$.
The transition function $\delta$ is then defined by
\begin{align*} 
    \delta(q, \sigma) :=  (\transfunc( \tf(q),\wfs(q) + \wfa(\sigma), \ef(\sigma)), \wfs(q) + \wfa(\sigma)) \quad\Forall q=\in Q , \sigma\in\Sigma.
\end{align*}
An example of this transformation can be seen in Figure \ref{fig:transformation}.

\begin{figure}
	
\begin{minipage}{\linewidth}
\centering
\subfloat[Ticket Graph $\ticketgraph$]{
\scalebox{0.7}{
\begin{tikzpicture}
\tikzset{nodestyle/.style={draw,circle,minimum size=1.4cm}}
\node[nodestyle](t1) at (0,0) {\large$\ticket_1$};
\node[nodestyle](t2) at (4,4) {\large$\ticket_2$};
\node[nodestyle](t3) at (8,0) {\large$\ticket_3$};
\tikzset{arcstyle/.style={-latex}}
\tikzset{sl/.style={sloped, anchor=south,auto=false}}
\draw[arcstyle] (t1) to node[sl] {\large$ \mathbbm{1}_{\{\fareevent=\fareevent_1\wedge \monoidel \geq 1\}}$} (t2);
    \draw[arcstyle] (t1) to node[sl] {\large$ \mathbbm{1}_{\{\fareevent=\fareevent_2\wedge \monoidel \geq 1 \}}$} (t3);
    \draw[arcstyle] (t2) to node[sl] {\large$ \mathbbm{1}_{\{\fareevent=\fareevent_2\}}$}(t3);
\end{tikzpicture} } }
%
\subfloat[DFA]{
\scalebox{0.6}{
\begin{tikzpicture}
\tikzset{nodestyle/.style={draw,circle,minimum size=1.4cm}}
\node[nodestyle](t10) at (0,6)  {$(\ticket_1,0)$};
\node[nodestyle](t11) at (0,4)  {$(\ticket_1,1)$};
\node[nodestyle](t12) at (0,2)  {$(\ticket_1,2)$};
\node[nodestyle](t20) at (5,10) {$(\ticket_2,0)$};
\node[nodestyle](t21) at (5,8)  {$(\ticket_2,1)$};
\node[nodestyle](t22) at (5,6)  {$(\ticket_2,2)$};
\node[nodestyle](t30) at (10,6) {$(\ticket_3,0)$};
\node[nodestyle](t31) at (10,4) {$(\ticket_3,1)$};
\node[nodestyle](t32) at (10,2) {$(\ticket_3,2)$};
\tikzset{arcstyle/.style={-latex}}
\tikzset{sl/.style={sloped, anchor=south,auto=false}}
\draw[arcstyle] (t10)  to node[sl] {$(1,\fareevent_1)$}  (t21);
\draw[arcstyle] (t10) to (t22);
\draw[arcstyle] (t10) to (t31);
\draw[arcstyle] (t10) to (t32);
\draw[arcstyle] (t20) to  (t21);
\draw[arcstyle] (t20) to [bend left=45] (t22);
\draw[arcstyle] (t21) to  (t22);
\draw[arcstyle] (t30) to  (t31);
\draw[arcstyle] (t30) to [bend left=45] node[anchor=west] {$(2,\fareevent_1),(2,\fareevent_2)$}  (t32);
\draw[arcstyle] (t31) to  (t32);
\draw[arcstyle] (t20) to (t30);
\draw[arcstyle] (t20) to (t31);
\draw[arcstyle] (t20) to (t32);
\draw[arcstyle] (t21) to (t31);
\draw[arcstyle] (t21) to (t32);
\draw[arcstyle] (t22) to (t32);
\draw[arcstyle] (t11) to (t21);
\draw[arcstyle] (t11) to (t22);
\draw[arcstyle] (t11) to node[sl,xshift=.75cm] {$(0,\fareevent_2)$} (t31);
\draw[arcstyle] (t11) to (t32);
\draw[arcstyle] (t12) to (t22);
\draw[arcstyle] (t12) to node[sl] {$(0,\fareevent_2),(1,\fareevent_2),(2,\fareevent_2)$}(t32);
\end{tikzpicture}	
} }
\medskip
\end{minipage}
\caption{Transformation from CFN to DFA.
The CFN $\farenetwork = \sixtuple$ is given by the ticket graph $\ticketgraph$ depicted in \textbf{(a)}. Possible ticket transitions are given as indicator functions on the arcs. 
The associated fare monoind is $\pmonoid$ with  $\monoidset:=\{0,1,2\}$ and $a+b := \min \{ a + b, 2\}$ and $0\leq1\leq2$ and the fare events are given as $\fareevents=\{\fareevent_1,\fareevent_2\}$. 
The definitions of $\wfa$, $\ef$ and $\startstate$ depend on the routing graph and are omitted in this example.
Our transformation results in the DFA in \textbf{(b)}. There is a state for each element from $\tickets\times\monoidset$.
Each arc depicts a possible state transformation. For each arc, there is at least one letter from $\Sigma = \monoidset\times\fareevents$ that allows this transformation.
There is no arc between $(\ticket_1,0)$ and $(\ticket_2,0)$ as moving from $\ticket_1$ to $\ticket_2$ in the ticket graph requires $\monoidel\geq1$.
There should be a loop at every state, e.g., $(\ticket_1,0)$ transforms into $(\ticket_1,0)$ if letter $(0,\fareevent_1)$ or $(0,\fareevent_2)$ was found. We omit them in \textbf{b)} to not clutter the presentation.
}\label{fig:transformation}
\end{figure}

Note that we can restrict $\Sigma$ to those fare attributes that do really appear on arcs in $\rarcs$.
Hence, we can assume $\mathcal{O}(|\Sigma|) = \mathcal{O}(|\rarcs|)$.
Then, the above DFA can be created in $\mathcal{O}(|Q| + |\Sigma||Q|B) = \mathcal{O}(|\rarcs||\tickets||\monoidset|B)$ assuming the evaluation time of $\transfunc$ to be bounded by a polynomial $B$. 
Note that constructing a DFA $(Q,\Sigma,\delta,q_0,\{f\})$ for all $\state \in \statespace$ is still possible in $\mathcal{O}(|\rarcs||\tickets||\monoidset|B)$ time as the transition function $\delta$ and the state space $Q$ need only be constructed once.

REG-ShP can be solved over such a DFA in $\mathcal{O}(|V||\tickets||\monoidset| \log(|V||\tickets||\monoidset|) + |\rarcs||\tickets||\monoidset|)$ using the algorithm given by \citet{Barrett:2000:FPP:586846.586970}.
Hence, a superset $M$ of the Pareto-set of POEAP can be found in polynomial time by solving $|\statespace| = |\tickets||\monoidset|$ instances of REG-ShP.
As $M$ has at most $|\tickets||\monoidset|$ entries it takes $\mathcal{O}(|\tickets||\monoidset|\log(|\tickets||\monoidset|))$ time to extract the actual Pareto-set \citep{Kung:1975:FMS:321906.321910}.
Thus, we obtain an overall running time of $\mathcal{O}(|V||\tickets|^2|\monoidset|^2 \log(|V||\tickets||\monoidset|) + |\rarcs||\tickets|^2|\monoidset|^2  + |\rarcs||\tickets||\monoidset|B)$.
A slight modification of the proof for Reg-ShP in \citep{Barrett:2000:FPP:586846.586970} allows us to obtain a tighter bound.

\begin{lemma}[POEAP with constant travel time over finite monoids]\label{lemma:finite}
Consider POEAP  over $\farenetwork = \sixtuple$ with $\monoidset$ finite, i.e., $|\monoidset| < \infty$.
Assume all travel time functions are constants and that the evaluation time of  $\transfunc$ can be bounded by a polynomial $B$.
Then, POEAP can be solved in  $\mathcal{O}(|V||\tickets||\monoidset| \log(|V||\tickets||\monoidset|) + |\rarcs||\tickets||\monoidset|B)$.
\end{lemma}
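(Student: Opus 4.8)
The plan is to carry out the REG-ShP reduction sketched above only \emph{once} rather than once for each of the $|\statespace|$ possible final fare states. Since all travel times are constant, the arrival time of an $s,t$-path equals the fixed departure time plus the sum of the nonnegative per-arc travel times $c(\rarc)$, so earliest-arrival labels combine additively along arcs and Dijkstra's algorithm applies. Recall the DFA $D=\automaton$ associated to $\farenetwork$, with $Q:=\statespace=\tickets\times\monoidset$, alphabet $\Sigma:=\monoidset\times\fareevents$ restricted to the at most $|\rarcs|$ letters $(\wfa(\rarc),\ef(\rarc))$ that actually occur on arcs, and transition
\[
\delta(q,\sigma):=\bigl(\transfunc(\tf(q),\wfs(q)+\wfa(\sigma),\ef(\sigma)),\ \wfs(q)+\wfa(\sigma)\bigr).
\]
First I would tabulate $\delta$ in full, which costs $\mathcal{O}(|\Sigma||Q|B)=\mathcal{O}(|\rarcs||\tickets||\monoidset|B)$ when a single evaluation of $\transfunc$ takes time $B$.

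Next I would build the expanded graph $\hat G$ with vertex set $V\times Q$ and, for every arc $\rarc=(v,w)\in\rarcs$ and every $q\in Q$, an arc from $(v,q)$ to $\bigl(w,\delta(q,(\wfa(\rarc),\ef(\rarc)))\bigr)$ of weight $c(\rarc)$; with the table in hand each such arc is obtained by a lookup, so $\hat G$ has $\mathcal{O}(|V||\tickets||\monoidset|)$ vertices and $\mathcal{O}(|\rarcs||\tickets||\monoidset|)$ arcs and is produced within the bound just stated. I would then run a \emph{single} Dijkstra computation from $(s,\startstate(s))$ on $\hat G$ with a Fibonacci heap, at cost $\mathcal{O}(|V||\tickets||\monoidset|\log(|V||\tickets||\monoidset|)+|\rarcs||\tickets||\monoidset|)$. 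By construction an $s,t$-walk $p$ in $G$ lifts to a walk from $(s,\startstate(s))$ to $(t,\state(p))$ in $\hat G$ of equal length, and conversely; hence the computed label $d(t,f)$ at $(t,f)$ is the least arrival time achievable by an $s,t$-path with fare state $f$. Setting $\raptime^*(\ticket):=\min\{d(t,f):f\in\statespace,\ \tf(f)=\ticket\}$ gives, for each ticket, the earliest arrival time over all $s,t$-paths carrying that ticket.

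Finally, since the price of a path depends only on its final ticket, the at most $|\tickets|$ pairs $\{(\raptime^*(\ticket),\price(\ticket)):\raptime^*(\ticket)<\infty\}$ already contain, for every $p\in\stpaths$, a pair dominating $(\,c(p),\price(p)\,)$ componentwise. Extracting the nondominated pairs with the algorithm of \citet{Kung:1975:FMS:321906.321910} costs $\mathcal{O}(|\tickets|\log|\tickets|)$, and returning one shortest-path-tree representative per surviving pair yields a valid Pareto-set $\pprice$. Summing the three contributions and absorbing the lower-order $|\rarcs||\tickets||\monoidset|$ and $|\tickets|\log|\tickets|$ terms gives exactly $\mathcal{O}(|V||\tickets||\monoidset|\log(|V||\tickets||\monoidset|)+|\rarcs||\tickets||\monoidset|B)$.

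The only genuinely delicate point, which I would spell out carefully, is the correctness of the last reduction: that keeping one earliest-arriving witness per ticket discards no Pareto-optimal journey. This rests on (i) the price being constant on the set of $s,t$-paths sharing a ticket, (ii) the length- and fare-state-preserving correspondence between $s,t$-walks in $G$ and walks in $\hat G$, so that $d(t,f)$ is genuinely optimal, and (iii) passing from the (possibly non-simple) $G$-projection of a shortest $\hat G$-path back to a simple $s,t$-path without worsening either objective, which is precisely what the absence of price-decreasing cycles noted after the definition of conditional fare networks provides. Everything else is the bookkeeping of the DFA-times-graph construction plus one shortest-path run, i.e., the argument of \citet{Barrett:2000:FPP:586846.586970} carried out once instead of $|\statespace|$ times.
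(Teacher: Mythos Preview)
Your proposal is correct and follows essentially the same approach as the paper: build the product graph of $G$ with the DFA on state set $\tickets\times\monoidset$, run a single Dijkstra from $(s,\startstate(s))$, and then extract the Pareto-set from the labels at the $(t,\cdot)$ vertices. Your version is slightly more careful in tracking where the factor $B$ enters (through tabulating $\delta$) and in first collapsing to one label per ticket before applying Kung's algorithm, but these are cosmetic refinements rather than a different argument.
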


\begin{proof}
Let $D(s) = \automaton$ be the DFA constructed as described above but letting \mbox{$\mathcal{F} := Q = \statespace$}, i.e., now all states
in the automaton are also accepting states.
To keep consistence with automata terminology, we write 
$\sigma(v_1,v_2) := (\wfa(v_1,v_2),\ef(v_1,v_2))$ for $v_1,v_2\in V$.

We construct a product network $G^\times = (V^\times, A^\times)$ of $G$ and $D(s)$ with
\begin{align*}
    V(G^\times) &= V \times Q \\
    E(G^\times) &=  \{(v_1,q_1),(v_2,q_2) | (v_1,v_2)\in E, q_2 = \delta(q_1,\sigma(v_1,v_2))      \}.
\end{align*}
Note that for every $a\in A$ there are at most $|\tickets||\monoidset|$ edges in $E^\times$ and hence $|E^\times| \leq |\rarcs||\tickets||\monoidset|$. Thus, $G^\times$ can be constructed in $\mathcal{O}(|V||\tickets||\monoidset| + |\rarcs||\tickets||\monoidset|)$.
Using Dijkstra's algorithm, we compute a shortest path tree rooted at $(s,q_0)$ in $G^\times$. In particular, we obtain a shortest 
$(s,q_0),(t,q)$-path for all $q \in Q = F = \tickets\times\monoidset$.
Using a Fibonacci heap, Dijkstra's algorithm has a running time of $\mathcal{O}(|V^\times|\log(|V^\times|) + |E^\times| )$.
We can again extract the Pareto-set in $\mathcal{O}(|\tickets||\monoidset|\log(|\tickets||\monoidset|))$ time, giving an overall runtime of  $\mathcal{O}(|V||\tickets||\monoidset| \log(|V||\tickets||\monoidset|) + |\rarcs||\tickets||\monoidset|B)$.
\end{proof}

This result extends naturally to FIFO-travel time functions.

\begin{theorem}[POEAP over finite monoids is polynomial time solvable in $|\monoidset|$]
    Consider POEAP  over the conditional fare network ${\farenetwork = \sixtuple}$ under the  assumption that $|\monoidset| < \infty$,
that the travel time functions $c(a): I \rightarrow I, a\in A$ have the FIFO-property and can be evaluated in constant time,
    and that the evaluation time of $\transfunc$ can be bounded by a polynomial $B$.
    Then, POEAP can be solved in $\mathcal{O}(|V||\tickets||\monoidset| \log(|V||\tickets||\monoidset|) + |\rarcs||\tickets||\monoidset|B)$.
\end{theorem}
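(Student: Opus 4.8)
The plan is to reduce the FIFO case to the constant-travel-time case of Lemma~\ref{lemma:finite} by replacing the static shortest-path computation on the product network with a time-dependent one. First I would reuse the product-network construction from the proof of Lemma~\ref{lemma:finite}: build the DFA $D(s)=\automaton$ with $Q:=\statespace$, $\Sigma$ restricted to the fare attributes actually occurring on arcs of $\rarcs$ (so that $\mathcal{O}(|\Sigma|)=\mathcal{O}(|\rarcs|)$), $q_0:=\startstate(s)$ and $\mathcal{F}:=Q$, writing $\sigma(v_1,v_2):=(\wfa(v_1,v_2),\ef(v_1,v_2))$, and form $G^\times=(V^\times,A^\times)$ with $V^\times=V\times Q$ and an arc from $(v_1,q_1)$ to $(v_2,q_2)$ whenever $(v_1,v_2)\in\rarcs$ and $q_2=\delta(q_1,\sigma(v_1,v_2))$. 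The only change from Lemma~\ref{lemma:finite} is that each product arc $((v_1,q_1),(v_2,q_2))$ now carries the time-dependent travel time function $c(v_1,v_2)$ of its projection to $G$. As before, $|A^\times|\le |\rarcs||\tickets||\monoidset|$ and $|V^\times|=|V||\tickets||\monoidset|$, and constructing $\delta$ costs $\mathcal{O}(|\rarcs||\tickets||\monoidset|B)$ using the polynomial bound $B$ on the evaluation time of $\transfunc$.

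Second, I would observe that $G^\times$ inherits the FIFO property: every product arc uses exactly the travel time function of a single arc of $G$, which is FIFO by assumption, so no overtaking is introduced. Consequently time-dependent Dijkstra started from $(s,q_0)$ at the query departure time is correct and, with a Fibonacci heap, runs in $\mathcal{O}(|V^\times|\log|V^\times| + |A^\times|)$, since each arc function is evaluated in constant time. The key correctness claim to spell out is that the label Dijkstra assigns to a product vertex $(t,q)$ equals the \emph{earliest arrival time} over all $s$,$t$-paths $p$ in $G$ with $\state(p)=q$; this holds because the fare state of a path is the deterministic image of its arc sequence under $\transfunc$, so directed paths in $G^\times$ from $(s,q_0)$ to $(t,q)$ correspond bijectively to $s$,$t$-paths in $G$ with final fare state $q$, and the arrival time depends only on the arc sequence and the departure time.

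Third, I would extract the Pareto-set. For each $q=(\ticket,\monoidel)\in Q$ reachable at $t$, the shortest-path tree yields an $s$,$t$-path with arrival time equal to that label and price $\price(\ticket)$; since paths sharing a fare state have the same price, the Pareto-set of $\{(\price(p),c(p)) : p\in\stpaths\}$ coincides with the Pareto-set of the at most $|\tickets||\monoidset|$ pairs just computed. Applying the algorithm of \citet{Kung:1975:FMS:321906.321910} filters these to the Pareto frontier in $\mathcal{O}(|\tickets||\monoidset|\log(|\tickets||\monoidset|))$ time, and the corresponding tree paths form a valid $\pprice$. Summing the DFA/product construction, the Dijkstra run, and the filtering gives the claimed bound $\mathcal{O}(|V||\tickets||\monoidset|\log(|V||\tickets||\monoidset|) + |\rarcs||\tickets||\monoidset|B)$.

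The main obstacle I expect is the correctness argument for time-dependent Dijkstra on $G^\times$ rather than the bookkeeping: one must check that enlarging the state space by the automaton does not interfere with the consistency (FIFO) condition underpinning Dijkstra's label-setting argument, and that ``earliest arrival per fare state'' is exactly the quantity needed to recover the true price/arrival Pareto-set. Both points are routine once phrased precisely, but they are where a careless argument could slip.
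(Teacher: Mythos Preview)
Your proposal is correct and follows essentially the same approach as the paper: reduce to Lemma~\ref{lemma:finite} by running time-dependent Dijkstra (which, by \citet{Orda90shortest-pathand}, is correct and has the same asymptotic complexity on FIFO networks) on the product graph $G^\times$ in place of static Dijkstra. The paper's proof is terser and does not spell out the preservation of the FIFO property on $G^\times$ or the Pareto-extraction step, but these are exactly the details you supply.
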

\begin{proof}
    It is well-established that time-dependent shortest path problems can be solved using a modified version of Dijkstra's algorithm for FIFO networks \citep{Orda90shortest-pathand}.
    The modified algorithm exhibits the same running time as the standard algorithm if the evaluation time of travel time functions is bounded by a constant.
    Thus, using the modified Dijkstra variant in Lemma \ref{lemma:finite}  solves POEAP with FIFO travel time functions in $\mathcal{O}(|V||\tickets||\monoidset| \log(|V||\tickets||\monoidset|) + |\rarcs||\tickets||\monoidset|B)$.
\end{proof}

\section{Pseudocode for the \MCRAP{} algorithm}\label{app:mcrap}
We provide the pseudocode of the \MCRAP{} algorithm \cite{DBLP:journals/transci/DellingPW15}
adapted to CFNs as developed in Section \ref{sec:fareinrap}.
The function $B.\mathrm{add}((\raptime,\state))$ removes all labels from $B$ that are dominated by $(\raptime,\state)$.

\begin{algorithm}
\DontPrintSemicolon
\small
\caption{Price-optimal \MCRAP{}}\label{algo:mcrap}
\KwData{Time Table $\TimeTable$, CFN $\farenetwork = \sixtuple$, origin $p_s\in\TTP$, destination $p_t\in\TTP$, departure time $\raptime$, number of rounds $K$ }

$B_0(p_s) \leftarrow \{  (\raptime,\startstate(p_s))  \}$\;
mark $p_s$\;
\For{$k=1$ \KwTo $K$}
{
    
    $B_k(\cdot) \leftarrow \emptyset$\;
    $Q \leftarrow \emptyset$ \tcp*{FIFO queue}
    \ForEach(\tcp*[f]{Find first marked stop per route}){marked stop $p$}
    {
        \ForEach{route $r$ with $p \in \TTP(r)$}
        {
            \eIf{$(r,p')\in Q$ for some $p'\in\TTP$}
            {
                \If{$p$ comes before $p'$ in $r$}
                {
                    $Q.\mathrm{remove}(r,p')$\;
                    $Q.\mathrm{add}(r,p)$\;
                }
            }
            {$Q.\mathrm{add}(r,p)$\;}
        }
        unmark $p$\;
    }
        \ForEach(\tcp*[f]{Traverse Routes}){$(r,p)\in Q$}
        {
            $B_r  \leftarrow \emptyset$\;
            \ForEach{stop $p_i$ in $r$ beginning with $p$}
            {
                \ForEach{$(\raptime,\state,d) \in B_r$}
                {
                $\raptime \leftarrow \arr(d,p)${}\tcp*{Update label at new station $p_i$}
                $\wfs(\state) \leftarrow \wfs(\state)+\tripweight(d,p)$\;
                $\tf(\state) \leftarrow \transfunc(\tf(\state),\wfs(\state),\tripevent(d,p))$\;
                \If{$(\raptime,\state)$ not dominated by $\cup_{j\in 1,\dots,k}B_{j}(p)$}
                {
                $B_{k}(p).\mathrm{add}((\raptime,\state))$\tcp*{Found new nondominated label}
                mark $p_i$\;
                }

                }
                \ForEach{ $(\raptime,\state)\in B_{k-1}(p)$ }
                { $\raptime' \leftarrow \raptime + \transtime${}\tcp*{Apply transfer costs}
                    $\wfs(\altstate) \leftarrow \wfs(\state)+\transweight(d,p)$\;
                    $\tf(\altstate) \leftarrow \transfunc(\tf(\state),\wfs(\altstate),\transevent(d,p))$\;
                    \If{$(\raptime',\altstate)$ not dominated by $B_r$}
                    {   
                        $d\leftarrow \min(d: \dep(d,p) \geq \raptime')$ \tcp*{Find next trip}
                        $B_r.\mathrm{add}((\raptime',g,d))$ \tcp*{Add nondominated label to route bag}
                    }
                }
            }
        }

        \ForEach(\tcp*[f]{Process Footpaths}){marked stop $p$}
        {
            \ForEach{footpath $(p,p',l)\in \TTF$}
            {
                \ForEach{$(\raptime, \state) \in B_k(p)$}
                {
                    \If{$(\raptime + l , \state)$ not dominated by $\cup_{j\in 1,\dots,k}B_{j}(p')$}
                    {
                        $B_k(p').\mathrm{add}((\raptime+l,\state))$
                    }
                }
            }
        }
        \BlankLine
        \If(\tcp*[f]{Early Termination Criterion}){no stop is marked} 
        {stop}
    }
\end{algorithm}

\color{black}
\end{document}